\documentclass{article}
\usepackage{arxiv}
\usepackage[version=4]{mhchem}
\usepackage{float}
\usepackage{booktabs}
\usepackage{algorithm}
\usepackage{algpseudocode}
\algrenewcommand\algorithmicrequire{\textbf{Inputs:}}
\algrenewcommand\algorithmicensure{\textbf{Outputs:}}
\usepackage{amsmath,amssymb,amsfonts,amsthm,mathtools}
\usepackage{a4wide}
\usepackage{xcolor}
\usepackage{verbatim}
\usepackage{epsfig,subfigure,epstopdf}
\usepackage[]{graphics}
\usepackage{graphicx,inputenc}
\usepackage{subfigure}
\usepackage{caption}
\usepackage{pictex}
\usepackage{wrapfig}
\usepackage{multirow}
\usepackage{placeins}
\usepackage{booktabs}
\usepackage{tabularx}
\usepackage{array}
\newcolumntype{Y}{>{\raggedright\arraybackslash}X}

\newcommand{\numtablestyle}{%
  \small
  \renewcommand{\arraystretch}{1.18}%
  \setlength{\tabcolsep}{6pt}%
}

\usepackage[T1]{fontenc}
\usepackage{authblk}

\definecolor{azure}{rgb}{0.0, 0.5, 1.0}
\definecolor{darkred}{rgb}{0.8, 0.0, 0.0}
\definecolor{DarkBlueNavy}{RGB}{0, 0, 128}
\definecolor{nicegreen}{RGB}{76, 175, 80}

\usepackage{fancyhdr}
\renewcommand{\shorttitle}{%
  \footnotesize\shortstack{Regularized Reconstruction of Dynamic Sources\\from Sparse Boundary Flux Data}%
}
\AtBeginDocument{\setlength{\headheight}{24pt}}

\newtheoremstyle{exampstyle}
  {\topsep} 
  {\topsep} 
  {\itshape} 
  {} 
  {\bfseries} 
  {.} 
  {.5em} 
  {} 

\theoremstyle{exampstyle}
\numberwithin{equation}{section}
\newtheorem{theorem}{Theorem}[section]
\newtheorem{definition}[theorem]{Definition}
\newtheorem{lemma}[theorem]{Lemma}

\newtheorem{remark}[theorem]{Remark}

\newtheorem{proposition}[theorem]{Proposition}

\allowdisplaybreaks
\usepackage{parskip}
\usepackage[nocompress]{cite}
\usepackage{hyperref}
\hypersetup{
colorlinks,
linkcolor={nicegreen},
citecolor={azure},
urlcolor={DarkBlueNavy}
}
\let\oldref\ref
\renewcommand{\ref}[1]{(\oldref{#1})}  
\renewcommand{\eqref}[1]{(\oldref{#1})} 
\DeclareMathOperator*{\argmin}{arg\,min}

\newbox\boxaddrone \newbox\boxaddrtwo

\def\N+{n\in\mathbb{N}^{+}}

\def\1d{\mathcal{D}((-\Delta)^{\gamma+1})}

\newcommand{\R}{\mathbb R}
\newcommand{\Q}{\mathbb Q}

\newcommand{\Dcal}{\mathcal D}
\newcommand{\dd}{\,\mathrm d}

\newcommand{\norm}[1]{\left\lVert #1\right\rVert}
\newcommand{\abs}[1]{\left\lvert #1\right\rvert}
\newcommand{\ip}[2]{\left\langle #1,#2\right\rangle}

\begin{document}
\title{Regularized Sequential Reconstructions of Dynamic Nonsmooth Sources from Sparse Boundary Flux Data}
\date{}

\author[1]{Davide Bianchi\thanks{bianchid@mail.sysu.edu.cn}}
\author[1]{Tianrui Dai\thanks{daitr@mail.sysu.edu.cn}}
\author[2]{Hongqin Zhang\thanks{hqzhang1112@mail.iap.ac.cn}}
\author[3,4]{Shupeng Zhang\thanks{zhangshp8@mail.sysu.edu.cn}}
\author[1,5]{Zhidong Zhang\thanks{zhangzhidong@mail.sysu.edu.cn}}
\affil[1]{\normalsize{School of Mathematics (Zhuhai), Sun Yat-sen University, Zhuhai 519082, Guangdong, China}}
\affil[2]{\normalsize{State Key Laboratory of Earth System Numerical Modeling and Application, Institute of Atmospheric Physics, Chinese Academy of Sciences, Beijing, 100029, China}}
\affil[3]{\normalsize{Southern Marine Science and Engineering Guangdong Laboratory (Zhuhai), Sun Yat-sen University, Zhuhai 519082, Guangdong, China}}
\affil[4]{\normalsize{School of Atmospheric Sciences, Sun Yat-sen University, Zhuhai 519082, Guangdong, China}}
\affil[5]{\normalsize{Guangdong Province Key Laboratory of Computational Science, Sun Yat-sen University, Guangzhou 510000, Guangdong, China}}

\setcounter{tocdepth}{2}

\maketitle

\begin{abstract}
We consider an inverse source problem for the heat equation on the unit disk $\Omega\subset\mathbb R^2$ with flux measurements at finitely many boundary points. The source is piecewise constant in time on prescribed intervals and, on each interval, is the characteristic function of an unknown region with Lipschitz boundary. The measurements on each interval split into the contribution of the current source and a history term from earlier sources; subtracting the latter leaves a linear inverse problem for a forward operator $F_\tau$ that is densely defined and unbounded on $L^2(\Omega)$. We prove that $F_\tau$ is not closable, so the classical regularization theory for closed unbounded operators does not apply. We regularize instead with a Hilbert-scale penalty that compactifies it. The resulting Galerkin--Tikhonov reconstruction is sequential: each source is recovered from the flux observed up to the end of its interval and from the previously reconstructed sources. We prove its convergence and uniqueness, show that earlier reconstruction errors propagate into later data with exponential decay in time, and establish  stability estimates with respect to the number of sensors. A numerical experiment on noisy measurements shows the feasibility of the sequential approximations, refined by Fourier approximation and boundary correction by a neural network.
\end{abstract}

\noindent \textbf{Keywords}: inverse source problems, sparse boundary data, regularization by compactification, Tikhonov regularization.

\noindent \textbf{AMS Subject Classifications}: 35R30, 65M32, 65J20.

\section{Introduction}\label{section_intro}

We consider an inverse source problem for the model  equation
\begin{equation*}\tag{ME}\label{PDE_2d}
 \begin{cases}
  \begin{aligned}
    (\partial_t-\Delta)u(x,t)&=P^\dagger(x,t), &&(x,t)\in\Omega\times(0,T),\\
    u(x,t)&=0,&&(x,t)\in\partial\Omega\times(0,T)\cup \Omega\times\{0\},
  \end{aligned}
  \end{cases}
\end{equation*}
on the unit disk $\Omega\subset\mathbb R^2$. The unknown source $P^\dagger$ is piecewise constant in time and has the form
\begin{equation}\label{source}
  P^\dagger(x,t):=\sum_{k=1}^K p_k^\dagger(x)\chi_{I_k}(t),
  \qquad
  I_k:=[t_{k-1},t_k),\qquad
  p_k^\dagger:=\chi_{D_k},
\end{equation}
where the time mesh $0=t_0<t_1<\ldots<t_K=T$ is prescribed and the spatial regions $\{D_k\}_{k=1}^K$ are unknown Lipschitz regions, that is, open subsets of $\Omega$ with Lipschitz boundary (Definition~\ref{def:regions}). These regions are the actual unknowns of the problem; the sources $p_k^\dagger$ are their characteristic functions. For an integer $M\ge 2$, we choose pairwise distinct boundary sensors
\[
z_j=(\cos\theta_j,\sin\theta_j)\in\partial\Omega, \qquad j=1,\ldots,M,
\]
and denote the set of sensors by $\Gamma_M:=\{z_1,\ldots,z_M\}$. We assume that $\Gamma_M$ is \emph{admissible}, in the sense that there exist indices $a,b\in\{1,\ldots,M\}$ with
\[
\theta_a-\theta_b\notin\pi\Q.
\]
The measurements consist of the boundary flux data
\begin{equation}\label{data}
g(t):=
\left(
\partial_{\mathbf n}u(z_1,t),
\ldots,
\partial_{\mathbf n}u(z_M,t)
\right)^\top\in\R^M,\qquad 0<t<T,
\end{equation}
with $\mathbf n$ the outward unit normal to $\partial\Omega$. The inverse model problem we address is the following:
\begin{equation*}\tag{MP}\label{inverse_problem}
  \text{recover}\ (p_1^\dagger,\ldots,p_K^\dagger)\ \text{in \eqref{source} from the data \eqref{data}}.
\end{equation*}
We denote the full forward map by $\mathcal F$, so that $\mathcal F(p_1^\dagger,\ldots,p_K^\dagger)=g$. Recovering the indicator sources $p_k^\dagger$ in $L^2(\Omega)$ determines the regions $D_k$: thresholding at $\tfrac12$ gives $D_k$ up to a set of measure zero, and a Lipschitz region is determined by its characteristic function in $L^2(\Omega)$ (Lemma~\ref{lem:regions-determined}).

This formulation is motivated by applications in which direct measurements inside the domain are impractical or unsafe. In atmospheric science, for instance, Equation~\eqref{PDE_2d} models pollutant diffusion \cite{byun2006review,EganMahoney:1972}: $u$ is the pollutant concentration, the regions $\{D_k\}_{k=1}^K$ represent the emission regions active on the successive time intervals, and only boundary measurements such as \eqref{data} are available. Determining the locations of these sources amounts precisely to solving \eqref{inverse_problem}, which makes both its theoretical analysis and its numerical resolution of practical relevance.

Beyond this classical application, modern numerical methods for advection–diffusion and related diffusion-dominated equations have been successfully applied to a broad range of environmental and geophysical problems, including:
\begin{itemize}
    \item Urban air quality management, where CFD-coupled models simulate \ce{NO2} and \(\mathrm{PM}_{10}\) dispersion in street canyons ~\cite{lin2022modelling};
    \item  Nuclear emergency response, where Lagrangian particle models track radionuclide transport following accidents such as Fukushima ~\cite{bakels2024flexpart};
    \item Ecosystem monitoring, where steady-state solutions compute flux footprints for spatial attribution of greenhouse gas emissions from tower measurements ~\cite{schlutow2026boundary};
    \item Climate intervention studies, where stochastic dispersion models estimate aerosol plume spreading rates for marine cloud brightening ~\cite{mcmichael2024exploring}.
\end{itemize}
Closely related diffusion models also arise in hydrology and land-surface science:
\begin{itemize}
    \item the Richards equation, which governs water flow in partially saturated porous media, is central to hydrology, agriculture, environmental modeling, and waste management \cite{farthing2017numerical};
    \item lake schemes in land-surface models, such as CoLM-Lake, describe lake thermal dynamics in climate systems through heat diffusion, vertical mixing, phase transitions, and surface energy exchange \cite{dai2018lake}.
\end{itemize}

Despite their differing physical settings, these applications share the same
structural difficulty, which can be decomposed in three features: the source is confined to small regions, it changes over
time, and it must be inferred from a few point measurements at the boundary.

Each of these  features is reflected in the literature on inverse source problems
for parabolic equations, which has a long history. Representative contributions
include \cite{BeauchardCannarsaYamamoto:2014,ChaikovskiiLiubavinZhang:2023,
ChengLuYamamoto:2023,ImanuvilovLiuYamamoto:2024,WangZhangZhang:2023}. The last
feature in particular has driven a growing body of work on sparse-data settings,
in which the observation set is a finite subset of $\partial\Omega$, as is the
case for $\Gamma_M$ here. We refer to \cite{HelinLassasYlinenZhang:2020,
KianLiLiuYamamoto:2021,LiLiuShi:2023,LiYangZhangZhang:2021,RundellZhang:2018} and
the references therein. On the unit disk, uniqueness from finitely many boundary
points is available for a time-independent source \cite{HettlichRundell2001} and
for a separable source $p(x)q(t)$ \cite{RundellZhang2020}. These results underlie
the sampling-based reconstructions with moving sensors of
\cite{LinOuZhangZhang2024,GuZhangZhang2026}, where the unknown is a single static
region described by a finite-dimensional shape parameterization. Sources of the
semi-discrete form \eqref{source} have been treated in \cite{LinZhangZhang2022} on
general domains, but there the flux is required on a nonempty open subset of
$\partial\Omega$. Problem \eqref{inverse_problem} sits between the two: the source
is piecewise constant in time with a distinct unknown support on each interval,
and the data consist of finitely many fixed boundary sensors.

Building on this sparse-observation framework, the present work develops a sequential
reconstruction method for~\eqref{inverse_problem} and tests it numerically. The method
exploits a triangular structure of the forward map $\mathcal F$ with respect to the time
intervals: the data on $I_k=[t_{k-1},t_k)$ depend only on $p_1^\dagger,\ldots,p_k^\dagger$,
and the current source $p_k^\dagger$ is acted on by the same type of
operator. Once the observations on $I_k$ are collected, $p_k^\dagger$ is therefore
reconstructed from them and from the estimates of the earlier sources alone, without
measurements from later intervals. The coupled problem $\mathcal F(p_1^\dagger,\ldots,p_K^\dagger)=g$
is thus replaced by $K$ inverse problems governed by the same linear forward map, solved one after the other, each of which
must be regularized while the errors inherited from the previous steps are kept under control.

To make this precise, let $X:=L^2(\Omega)$ and let $\{(\lambda_n,\varphi_n)\}_{n\ge1}$ be the eigenpairs of the Dirichlet Laplacian $\mathcal L=-\Delta$ on $\Omega$, with $\{\varphi_n\}$ orthonormal in $X$ and $0<\lambda_1\le\lambda_2\le\cdots\to\infty$, and write $f_n:=\ip{f}{\varphi_n}_X$ for the spectral coefficients of $f\in X$. Set $\tau_k:=t_k-t_{k-1}$ and collect the data on the $k$-th interval in $v_k(t):=g(t_{k-1}+t)$, $0<t<\tau_k$, an element of $Y_{\tau_k}:=L^2(0,\tau_k;\R^M)$. Since $g$ is $\R^M$-valued, one entry per sensor, we collect the sensor coefficients of each eigenfunction in a vector,
\[
\mathbf c_n:=\bigl(c_{z_1,n},\ldots,c_{z_M,n}\bigr)^\top\in\R^M,
\qquad
c_{z,n}:=-\lambda_n^{-1}\partial_{\mathbf n}\varphi_n(z).
\]
All series below are then $\R^M$-valued, summed componentwise, and their $j$-th component is the flux at the sensor $z_j$. Finally, the sources are measured in the Hilbert scale generated by $\mathcal L$: for $s>0$, let
\[
L_s:=(I+\mathcal L)^{s/2},
\qquad
L_s\varphi_n=(1+\lambda_n)^{s/2}\varphi_n,
\qquad
\Dcal(L_s):=\Bigl\{f\in X:\sum_{n\ge1}(1+\lambda_n)^s|f_n|^2<\infty\Bigr\},
\]
so that $L_s$ is an isometric isomorphism from $(\Dcal(L_s),\|L_s\cdot\|_X)$ onto $X$ and $L_s^{-1}$ is bounded and compact on $X$. For $0<s<\tfrac12$ one has $\Dcal(L_s)=H^s(\Omega)$, which still contains the characteristic function of every Lipschitz region (Lemma~\ref{lem:Lsdomain}); such an $s$ is fixed throughout.

Expanding $u(\cdot,t)=\sum_nu_n(t)\varphi_n$, each coefficient solves $u_n'+\lambda_nu_n=\sum_ip_{i,n}^\dagger\chi_{I_i}$ with $u_n(0)=0$, by the homogeneous initial condition in \eqref{PDE_2d}, where $p_{i,n}^\dagger:=\ip{p_i^\dagger}{\varphi_n}_X$. Variation of constants on $I_k$ gives, for $0<t<\tau_k$,
\[
u_n(t_{k-1}+t)
=\frac{1}{\lambda_n}\Bigl[\underbrace{p_{k,n}^\dagger\,(1-e^{-\lambda_nt})}_{\text{current source}}
+\underbrace{\sum_{i<k}p_{i,n}^\dagger\,(1-e^{-\lambda_n\tau_i})\,e^{-\lambda_n(t+t_{k-1}-t_i)}}_{\text{earlier sources}}\Bigr].
\]
The first term is driven by the source active on $I_k$; each earlier source $p_i^\dagger$ contributes only what it injected during $I_i$, damped by the factor $e^{-\lambda_n(t_{k-1}-t_i)}$ of the elapsed time. Since $\partial_{\mathbf n}u(z,t)=-\sum_n\lambda_nu_n(t)\,c_{z,n}$, the flux splits in the same way, and the two kinds of terms define the operators
\begin{align*}
F_\tau:\Dcal(L_s)\subset X\to Y_\tau,\qquad (F_\tau f)(t)&:=\sum_{n\ge1}\mathbf c_n f_n\,(e^{-\lambda_nt}-1),
&&0<t<\tau,\\
H_{k,i}:X\to Y_{\tau_k},\qquad (H_{k,i}f)(t)&:=-\sum_{n\ge1}\mathbf c_n f_n\,(1-e^{-\lambda_n\tau_i})\,e^{-\lambda_n(t+t_{k-1}-t_i)},
&&0<t<\tau_k,
\end{align*}
the current-interval forward operator $F_\tau$ and, for $1\le i<k\le K$, the history operators $H_{k,i}$ (Definition~\ref{current-slab-and-history-op}). Both series converge absolutely and uniformly in time for $f\in\Dcal(L_s)$, in particular for the characteristic sources, and the series defining $H_{k,i}f$ converges in $Y_{\tau_k}$ for every $f\in X$. The choice of the domain $\Dcal(L_s)$ is discussed in Remark~\ref{rem:Ftau-domain}. In this notation the forward map is block lower triangular (Lemma~\ref{lem:history-decomposition}),
\begin{equation}\label{eq:intro-decomposition}
\begin{pmatrix} v_1\\ v_2\\ \vdots\\ v_K\end{pmatrix}
=
\begin{pmatrix}
F_{\tau_1} & & & \\
H_{2,1} & F_{\tau_2} & & \\
\vdots & & \ddots & \\
H_{K,1} & H_{K,2} & \cdots & F_{\tau_K}
\end{pmatrix}
\begin{pmatrix} p_1^\dagger\\ p_2^\dagger\\ \vdots\\ p_K^\dagger\end{pmatrix},
\qquad\text{i.e.}\qquad
v_k=\sum_{i<k}H_{k,i}p_i^\dagger+F_{\tau_k}p_k^\dagger .
\end{equation}
The diagonal blocks are all of the same type $F_\tau$, and the history blocks act only on sources that precede $p_k^\dagger$. Forward substitution in \eqref{eq:intro-decomposition} therefore yields, for $k=1,\ldots,K$, a problem for the current source alone,
\begin{equation}\label{eq:intro-shifted}
F_{\tau_k}p_k^\dagger=\widetilde v_k,
\qquad
\widetilde v_1:=v_1,
\qquad
\widetilde v_k:=v_k-\sum_{i<k}H_{k,i}p_i^\dagger\quad (k\ge2).
\end{equation}
 This is the sequential scheme analyzed below. Its analysis reduces to the study of the single linear operator $F_\tau$ and of the propagation of errors through the history operators $H_{k,i}$.

The  core of the paper studies the properties of the linear
problem~\eqref{eq:intro-shifted}, and its regularization. We prove that $F_\tau$, as an operator in $X$, is unbounded and \emph{not closable} (Lemma~\ref{lem:Ftau-unbounded} and Proposition~\ref{prop:F-unbounded}): there are sources $f_j\in\Dcal(L_s)$ with $\|f_j\|_X\to0$ whose fluxes $F_\tau f_j$ converge in $Y_\tau$ to a nonzero time-constant datum. Consequently, the classical resolvent-based regularization theory for
closed unbounded operators \cite{Groetsch1992spectral,groetsch2007stable} does not apply here, and we regularize~\eqref{eq:intro-shifted} by
\begin{equation}\label{eq:intro-tikhonov-Ls}
\widehat{p} = \argmin_{p\in\Dcal(L_s)}\ \Bigl\{\tfrac12\|F_\tau p-y\|_{Y_\tau}^2+\tfrac\alpha2\|L_sp\|_X^2\Bigr\},
\end{equation}
which is a generalized Tikhonov regularization  with  penalty term in a Hilbert scale; see for example \cite[Section~8.4]{EnglHankeNeubauer1996}.  
The  penalty does two things. First, it does not exclude the true sources, which belong to $\Dcal(L_s)$ by Lemma~\ref{lem:Lsdomain}. Second, it is exactly the norm in which $F_\tau$ is bounded, so that \eqref{eq:intro-tikhonov-Ls} is a Tikhonov problem for a bounded operator between Hilbert spaces, to which the classical theory applies.

By a change of variables $\phi:=L_sp$, since $L_s$ is an isometry from $(\Dcal(L_s),\|L_s\cdot\|_X)$ onto $X$, problem~\eqref{eq:intro-tikhonov-Ls} translates to the equivalent standard Tikhonov problem for the operator $\mathcal A_{\tau,s}:X\to Y_\tau$,
\begin{align}\label{eq:intro-tikhonov-phi}
&\widehat{\phi} = \argmin_{\phi\in X}\ \Bigl\{\tfrac12\|\mathcal A_{\tau,s}\phi-y\|_{Y_\tau}^2+\tfrac\alpha2\|\phi\|_X^2\Bigr\}, \\
&(\mathcal A_{\tau,s}\phi)(t):=(F_\tau L_s^{-1}\phi)(t)=\sum_{n\ge1}(1+\lambda_n)^{-s/2}\,\mathbf c_n\phi_n\,(e^{-\lambda_nt}-1), \qquad 0<t<\tau.\nonumber
\end{align}
The operator $\mathcal A_{\tau,s}$ is Hilbert--Schmidt  from $X$ to $Y_\tau$, and  injective (Propositions~\ref{prop:Ataus-HS} and~\ref{prop:injectivity}), so the classical theory for compact operators now applies. The change of variables is exact for the data as well: with $\phi_k^\dagger:=L_sp_k^\dagger\in X$, problem \eqref{eq:intro-shifted} becomes $\mathcal A_{\tau_k,s}\phi_k^\dagger=\widetilde v_k$, and injectivity yields, by forward substitution in \eqref{eq:intro-decomposition}, uniqueness for the full dynamic problem (Proposition~\ref{proposition_uniqueness}). 

The reconstruction is made by forward substitution in \eqref{eq:intro-decomposition} with one regularized problem \eqref{eq:intro-tikhonov-phi} per interval. Given noisy data 
\begin{equation*}
   v_k^\delta \qquad \mbox{with}\qquad   \|v_k^\delta-v_k\|_{Y_{\tau_k}}\leq\delta,
\end{equation*}
and the reconstructions $\widehat p_1^\delta,\ldots,\widehat p_{k-1}^\delta$ of the earlier sources, we set $\widetilde v_k^\delta:=v_k^\delta-\sum_{i<k}H_{k,i}\widehat p_i^\delta$ and compute, on the  Galerkin space $V_n:=\operatorname{span}\{\varphi_1,\ldots,\varphi_n\}$,
\begin{equation*}
\widehat\phi_k^\delta
:=\operatorname*{argmin}_{\phi\in V_{n_k}}
\Bigl\{
\tfrac12\|\mathcal A_{\tau_k,s}\phi-\widetilde v_k^\delta\|_{Y_{\tau_k}}^2
+\tfrac{\alpha_k}2\|\phi\|_X^2
\Bigr\},
\qquad
\widehat p_k^\delta:=L_s^{-1}\widehat\phi_k^\delta .
\end{equation*}
Since $L_s^{-1}V_n=V_n$, this is exactly the Hilbert-scale problem \eqref{eq:intro-tikhonov-Ls} for $F_{\tau_k}$ on $V_{n_k}$. The method is stated in Definition~\ref{def:sequential-reconstruction} and summarized in Algorithm~\ref{alg:sequential} in Section~\ref{subsec:mainthm}. Our main results are the following.
\begin{itemize}
\item \emph{Convergence.} $\|\widehat p_k^\delta-p_k^\dagger\|_X\to0$ as $\delta\to0$ for every $k$, provided $\alpha_k(\delta)$ and $n_k(\delta)$ are chosen consistently with the effective error $\eta_k^\delta:=\|\widetilde v_k^\delta-F_{\tau_k}p_k^\dagger\|_{Y_{\tau_k}}$ of the shifted data (Theorem~\ref{thm:dynamic-convergence}). Thresholding then recovers the regions $D_k$ in measure.
\item \emph{Error propagation.} Errors on earlier intervals enter later data through the history operators, with exponential decay (Section~\ref{section_quantitative}):
\[
\eta_k^\delta\le\delta+C_H\sum_{i<k}e^{-\lambda_1(t_{k-1}-t_i)}\,\|\widehat p_i^\delta-p_i^\dagger\|_X .
\]
\item \emph{Sensor stability.} For nested sensor sets, the singular values of $\mathcal A_{\tau,s}$ restricted to $V_n$ are nondecreasing as sensors are added, so the finite-dimensional inverse stability constants are nonincreasing (Section~\ref{section_sensor_stability}).
\end{itemize}

The rest of the paper is organized as follows. Section~\ref{section_pre} fixes the notation, recalls the function spaces and the operator-theoretic facts used later. Section~\ref{section_main} introduces the block triangular splitting of the forward operator into current and past contributions and the compactified formulation, proves injectivity and finite-window uniqueness for the dynamic inverse problem, and develops the sequential Galerkin--Tikhonov reconstruction together with its convergence analysis. Section~\ref{section_stability} estimates the propagation of reconstruction errors into later data and establishes inverse stability estimates with respect to the number of sensors. Section~\ref{section_num} presents the numerical experiments.

\section{Notation and preliminaries}\label{section_pre}
Throughout, $\Omega\subset\R^2$ is the open unit disk, $X:=L^2(\Omega)$ with inner product $\ip{\cdot}{\cdot}_X$ and norm $\|\cdot\|_X$. For a measurable set $E\subset\R^2$, $|E|$ is its Lebesgue measure, $\chi_E$ its characteristic function, and $E\triangle D:=(E\setminus D)\cup(D\setminus E)$ the symmetric difference. Points of $\Omega$ are written in polar coordinates $x=(r\cos\theta,r\sin\theta)$, every $z\in\partial\Omega$ is written as $z=(\cos\theta_z,\sin\theta_z)$, and $\partial_{\mathbf n}=\partial_r$ is the outward normal derivative on $\partial\Omega$. On $\R^M$ we use the Euclidean norm $\|\cdot\|_{\R^M}$ and inner product $a\cdot b$. For $\tau>0$ the data space on a time window of length $\tau$ is
\begin{equation}\label{eq:Ytau}
Y_\tau:=L^2(0,\tau;\R^M),
\qquad
\|v\|_{Y_\tau}^2:=\int_0^\tau\|v(t)\|_{\R^M}^2\dd t .
\end{equation}
Exact quantities carry a dagger (i.e. $p_k^\dagger$, $\phi_k^\dagger$), noisy data a superscript $\delta$ (i.e. $v_k^\delta$), and reconstructions a hat (i.e. $\widehat p_k^\delta$).

\subsection{The unknown regions}\label{subsec:regions}
The quantities to be recovered in \eqref{inverse_problem} are the regions $D_1,\ldots,D_K$ of \eqref{source}; the sources $p_k^\dagger=\chi_{D_k}$ are their characteristic functions. 
\begin{definition}[Lipschitz regions]\label{def:regions}
A \emph{Lipschitz region} is an open set $D\subset\Omega$ whose boundary is Lipschitz. Throughout the paper, the sets $D_1,\ldots,D_K$ in \eqref{source} are Lipschitz regions, one for each interval $I_k$ of the prescribed time mesh: $D_k$ is the region where the source is active during $I_k$, and $p_k^\dagger=\chi_{D_k}$, regarded as an element of $X$, is the corresponding source.
\end{definition}
Since \eqref{inverse_problem} is posed in $X$, we recall this elementary and well-known fact: a Lipschitz region is determined by its characteristic function in $X$.
\begin{lemma}\label{lem:regions-determined}
Let $D$ and $D'$ be Lipschitz regions with $|D\triangle D'|=0$. Then $D=D'$.
\end{lemma}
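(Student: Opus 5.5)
The plan is to show that a Lipschitz region coincides with the set of its Lebesgue density-one points, or more simply with the interior of its closure intersected appropriately. The key fact is that an open set $D$ with Lipschitz boundary satisfies $D=\operatorname{int}(\overline D)$. Granting this, it suffices to prove $\overline D=\overline{D'}$, since then $D=\operatorname{int}\overline D=\operatorname{int}\overline{D'}=D'$.

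\emph{Step 1: $\overline D=\overline{D'}$.} Since $D$ is open, every $x\in D$ has a ball $B(x,r)\subset D$ of positive measure. We first show $D\subset\overline{D'}$. If $x\in D\setminus\overline{D'}$, then some ball $B(x,r)$ lies in $D$ and misses $D'$. This ball has positive measure and is contained in $D\setminus D'\subset D\triangle D'$, contradicting $|D\triangle D'|=0$. Hence $D\subset\overline{D'}$, so $\overline D\subset\overline{D'}$, and by symmetry the two closures are equal. This step uses only openness.

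\emph{Step 2: $D=\operatorname{int}\overline D$ for Lipschitz $D$.} The inclusion $D\subset\operatorname{int}\overline D$ is trivial. For the converse, take $x\in\operatorname{int}\overline D$ and suppose $x\notin D$. Then $x\in\overline D\setminus D=\partial D$. By the Lipschitz definition, there is a neighborhood $U$ of $x$ and a rotated coordinate system in which $D\cap U=\{(y_1,y_2)\in U: y_2<h(y_1)\}$, with $h$ Lipschitz. Any ball $B(x,\rho)\subset U$ then contains points with $y_2>h(y_1)$, namely the set $\{y_2>h(y_1)\}\cap B(x,\rho)$, which is open and nonempty (it contains $x+(0,\epsilon)$ for small $\epsilon$). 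These points lie outside $\overline D$, because locally $\overline D\cap U=\{y_2\le h(y_1)\}\cap U$. This contradicts $x\in\operatorname{int}\overline D$.

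The main obstacle is Step 2, specifically making precise that the Lipschitz definition gives $D$ locally on exactly \emph{one} side of the graph. This one-sidedness is what excludes slit-type sets, such as a disk minus a segment, which are open, have null symmetric difference with the full disk, and yet differ from it. I will state explicitly that the convention in Definition~\ref{def:regions} is the standard one, namely that $D$ lies locally below a Lipschitz graph and $\Omega\setminus\overline D$ lies locally above it. Given that convention, both the identity $\overline D\cap U=\{y_2\le h\}\cap U$ and the openness of the complement side follow from the continuity of $h$, and the argument closes.
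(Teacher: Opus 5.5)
Your proof is correct and follows the same route as the paper. Equality of closures comes from the positive measure of nonempty open sets, and then $D=\operatorname{int}\overline D$ follows from the one-sided local graph property in Grisvard's definition of a Lipschitz boundary; you spell out this second step, which the paper only cites.
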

\begin{proof}
Nonempty open sets have positive measure, so $|D\triangle D'|=0$ forces $\overline D=\overline{D'}$; since a Lipschitz region lies locally on one side of its boundary \cite[Definition~1.2.1.1]{Grisvard2011}, $D=\operatorname{int}\overline D=\operatorname{int}\overline{D'}=D'$.
\end{proof}
Consequently, recovering $p_k^\dagger$ in $X$ amounts to recovering $D_k$: for any representative $p$ of $p_k^\dagger$, the thresholded set $\{x\in\Omega:\ p(x)\ge\tfrac12\}$ differs from $D_k$ by a null set, and $D_k$ is the unique Lipschitz region with this property. This is the sense in which the thresholding step of Algorithm~\ref{alg:sequential} in Section~\ref{subsec:mainthm} returns the regions.

\subsection{Function spaces}
For an open set $G\subseteq\R^2$, $H^1(G)$ and $H^2(G)$ are the Sobolev spaces of functions in $L^2(G)$ whose weak derivatives of order up to one, respectively two, belong to $L^2(G)$, and $H^1_0(\Omega)$ is the closure of $C_c^\infty(\Omega)$ in $H^1(\Omega)$. For $0<s<1$, $H^s(G)$ is the fractional Sobolev space
\[
H^s(G):=\Bigl\{f\in L^2(G):\ \|f\|_{H^s(G)}^2:=\|f\|_{L^2(G)}^2+\int_G\int_G\frac{|f(x)-f(y)|^2}{|x-y|^{2+2s}}\dd x\dd y<\infty\Bigr\},
\]
see \cite[Section~2]{DiNezzaPalatucciValdinoci2012}. For such $s$, $H^{2+s}(G)$ is the space of $f\in H^2(G)$ whose second derivatives belong to $H^s(G)$. Given a bounded open set $D\subset\R^2$ with  Lipschitz boundary, $H^s(D)$ coincides with the space of restrictions to $D$ of functions in $H^s(\R^2)$ \cite[Theorem~5.4]{DiNezzaPalatucciValdinoci2012}.

For $0<s<1$, the Besov spaces $B^s_{2,\infty}(\mathbb R^2)$ and $B^s_{2,2}(\mathbb R^2)$ consist of all $f\in L^2(\mathbb R^2)$ such that, respectively,
\[
\sup_{|h|<1}|h|^{-s}\|f(\cdot+h)-f\|_{L^2(\mathbb R^2)}<\infty,
\qquad
\int_{|h|<1}|h|^{-2s}\|f(\cdot+h)-f\|_{L^2(\mathbb R^2)}^2\,\frac{\dd h}{|h|^2}<\infty,
\]
and $B^s_{2,2}(\mathbb R^2)=H^s(\mathbb R^2)$ with equivalent norms \cite[Section~2]{Sickel2021}.

\subsection{Compact and Hilbert--Schmidt operators}\label{subsec:operators}
We recall the facts on linear operators between Hilbert spaces $\mathcal H_1,\mathcal H_2$ that are used below; see \cite{Teschl2014} for proofs. The operator norm of a bounded linear operator $A:\mathcal H_1\to\mathcal H_2$ is written $\|A\|_{\mathcal L(\mathcal H_1,\mathcal H_2)}$, or $\|A\|_{\mathcal L(\mathcal H_1)}$ if $\mathcal H_2=\mathcal H_1$, or simply $\|A\|$ when the spaces are clearly understandable from the context. A bounded operator is compact if it is the limit in operator norm of finite-rank operators \cite[Section~5.2]{Teschl2014}. A compact operator $A$ has singular values $\sigma_1(A)\ge\sigma_2(A)\ge\cdots>0$, the square roots of the nonzero eigenvalues of $A^*A$ counted with multiplicity, and $\|A\|=\sigma_1(A)$ \cite[Theorem~6.7]{Teschl2014}. A bounded operator $A$ is Hilbert--Schmidt if
\[
\|A\|_{\rm HS}^2:=\sum_{n\ge1}\|Ae_n\|_{\mathcal H_2}^2<\infty
\]
for one, and then for every, orthonormal basis $\{e_n\}_{n\ge1}$ of $\mathcal H_1$ \cite[Lemma~6.11]{Teschl2014}. Hilbert--Schmidt operators are compact, $\|A\|_{\rm HS}^2=\sum_q\sigma_q(A)^2$, and hence $\|A\|\le\|A\|_{\rm HS}$. 

We also need the following fact on unbounded operators: a densely defined operator $A$ has an adjoint $A^*$, and $A$ is closable if and only if $\Dcal(A^*)$ is dense \cite[Section~2.2, Lemma~2.4]{Teschl2014}. 

\subsection{The Dirichlet Laplacian on the disk}\label{subsec:laplacian}
Let $\mathcal L=-\Delta$ with domain
$\Dcal(\mathcal L)=H^2(\Omega)\cap H_0^1(\Omega)$. This positive
self-adjoint operator has an orthonormal eigenbasis $\{\varphi_n\}_{n\ge1}$,
with eigenvalues counted according to multiplicity:
\begin{equation}\label{behaviour_of_eigenvalue}
0<\lambda_1\le\lambda_2\le\cdots\longrightarrow\infty.
\end{equation}
On the unit disk, $\lambda_n\sim4n$ by Weyl's law, and
\begin{equation}\label{eigenfunction_2d}
\varphi_n(r,\theta)
=\omega_nJ_{m_n}(\sqrt{\lambda_n}\,r)\,\Theta_n(\theta),
\qquad m_n\in\mathbb N_0.
\end{equation}
Here $J_m$ denotes the Bessel function of the first kind of order $m$
\cite[Chapter~10]{NIST2010}, $\sqrt{\lambda_n}$ is a positive zero of $J_{m_n}$,
the angular factor $\Theta_n(\theta)$ is $\cos(m_n\theta)$ or $\sin(m_n\theta)$ for $m_n\ge1$, and $\Theta_n\equiv1$ for $m_n=0$;
see \cite{GrebenkovNguyen:2013}. We fix the signs of the eigenfunctions by
\[
\omega_n=
\begin{cases}
\displaystyle\frac{\sqrt{2/\pi}}{J_{m_n+1}(\sqrt{\lambda_n})},&m_n\ge1,\\[6pt]
\displaystyle\frac{1}{\sqrt\pi J_1(\sqrt{\lambda_n})},&m_n=0.
\end{cases}
\]
The Bessel identities $J_m'(j)=-J_{m+1}(j)$ at a zero $j$ of $J_m$ and
$\int_0^1rJ_m(jr)^2\dd r=\tfrac12J_{m+1}(j)^2$ show that
$\|\varphi_n\|_X=1$; see \cite[Eqs.~10.6.2 and~10.22.37]{NIST2010}.
Bessel functions of different nonnegative integer orders have no common
positive zeros \cite{Siegel:2014}. Thus radial eigenvalues are simple,
whereas every nonradial eigenspace is spanned by its cosine and sine modes.

For $f\in X$ we write $f_n:=\ip{f}{\varphi_n}_X$ for its spectral coefficients, so that $f=\sum_nf_n\varphi_n$ and $\|f\|_X^2=\sum_n|f_n|^2$ by Parseval's identity. The spectral Galerkin spaces are
\begin{equation}\label{eq:galerkin-spaces}
V_n:=\operatorname{span}\{\varphi_1,\ldots,\varphi_n\},\qquad
\mathcal V_{\rm fin}:=\bigcup_{n\ge1}V_n,
\end{equation}
and $P_n:X\to V_n$ denotes the orthogonal projector. By Weyl's law,
\begin{equation}\label{eq:weyl-summability}
\sum_{n\ge1}\lambda_n^{-\kappa}<\infty
\qquad\text{for every }\kappa>1,
\end{equation}
which is used repeatedly below. For $z\in\partial\Omega$, the boundary coefficients of the eigenfunctions are
\begin{equation}\label{czn_2d}
c_{z,n}:=-\lambda_n^{-1}\partial_{\mathbf n}\varphi_n(z)
=\begin{cases}
\sqrt{2/\pi}\,\lambda_n^{-1/2}\,\Theta_n(\theta_z),&m_n\ge1,\\
\pi^{-1/2}\lambda_n^{-1/2},&m_n=0,
\end{cases}
\end{equation}
where the explicit form follows from \eqref{eigenfunction_2d} and $J_m'(j)=-J_{m+1}(j)$. This is the disk coefficient used in \cite{RundellZhang2020,LinZhangZhang2022}. For the sensor set $\Gamma_M$ of Section~\ref{section_intro}, put
\begin{equation}\label{eq:bn}
\mathbf c_n:=(c_{z_1,n},\ldots,c_{z_M,n})^\top\in\R^M,
\qquad \|\mathbf c_n\|_{\mathbb R^M}^2\le C_M\lambda_n^{-1},
\qquad C_M:=2M/\pi.
\end{equation}

The following logarithmic divergence of the sensor coefficients is the key reason of the unboundedness of the forward operator (Section~\ref{subsec:splitting}).
\begin{lemma}\label{lem:cn-divergence}
Let the sensors be pairwise distinct. For every $b\in\R^M$, the sum of $|\mathbf c_n\cdot b|^2$ over all modes of angular order $m_n\le N$ satisfies
\begin{equation}\label{eq:cn-b-divergence}
\sum_{n:\,m_n\le N}|\mathbf c_n\cdot b|^2=\frac{\|b\|_{\R^M}^2}{2\pi}\log N+O(1)
\qquad(N\to\infty).
\end{equation}
In particular, $(\mathbf c_n\cdot b)_{n\ge1}\in\ell^2$ if and only if $b=0$.
\end{lemma}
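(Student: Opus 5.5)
The plan is to compute the sum exactly, one angular order $m$ at a time. Two ingredients do the work: the explicit form \eqref{czn_2d} of $c_{z,n}$, and the Rayleigh sum for Bessel zeros
\[
\sum_{k\ge1} j_{m,k}^{-2}=\frac{1}{4(m+1)},
\]
where $j_{m,k}$ is the $k$-th positive zero of $J_m$. By \eqref{eigenfunction_2d}, the eigenvalues of angular order $m$ are exactly $\lambda=j_{m,k}^2$, $k\ge1$. For $m=0$ there is one radial mode per $k$. For $m\ge1$ there are two modes per $k$, with $\Theta=\cos(m\theta)$ and $\Theta=\sin(m\theta)$. Since for fixed $N$ we are summing nonnegative terms over the finitely many orders $m\le N$, we may group them freely.

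\textbf{Step 1 (one order at a time).} Fix $b\in\R^M$ and set $B_m:=\sum_{j=1}^M b_j e^{\mathrm i m\theta_j}$. For $m\ge1$ and a fixed zero $j_{m,k}$, add the contributions of the cosine and sine modes. By \eqref{czn_2d} this gives
\[
\frac{2}{\pi}\,j_{m,k}^{-2}\Bigl[\Bigl(\sum_j b_j\cos m\theta_j\Bigr)^2+\Bigl(\sum_j b_j\sin m\theta_j\Bigr)^2\Bigr]=\frac{2}{\pi}\,j_{m,k}^{-2}\,|B_m|^2 .
\]
Summing over $k$ with the Rayleigh sum yields $\frac{1}{2\pi}\frac{|B_m|^2}{m+1}$. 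The radial order $m=0$ contributes the finite constant $\frac1\pi\bigl(\sum_j b_j\bigr)^2\cdot\frac14$.

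\textbf{Step 2 (diagonal and off-diagonal parts).} Expand
\[
|B_m|^2=\|b\|_{\R^M}^2+\sum_{j\ne l}b_jb_l\cos\bigl(m(\theta_j-\theta_l)\bigr).
\]
The diagonal part gives $\frac{\|b\|^2}{2\pi}\sum_{m=1}^N\frac1{m+1}=\frac{\|b\|^2}{2\pi}\log N+O(1)$. Each off-diagonal part is a partial sum of $\sum_m\frac{\cos(m\alpha)}{m+1}$ with $\alpha=\theta_j-\theta_l$. Because the sensors are pairwise distinct, $\alpha\notin2\pi\mathbb Z$, so the partial sums $\sum_{m\le N}\cos(m\alpha)$ are bounded by $1/|\sin(\alpha/2)|$. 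Dirichlet's test then shows these partial sums converge and hence are $O(1)$. This proves \eqref{eq:cn-b-divergence}.

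\textbf{Step 3 (the $\ell^2$ criterion).} If $b=0$ the sequence is identically zero. If $b\ne0$, the partial sums over $m_n\le N$ tend to $+\infty$, so $\sum_n|\mathbf c_n\cdot b|^2=\infty$.

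The only delicate points are justifying the Rayleigh sum, which I would cite from Watson or derive from the Weierstrass product of $J_m(z)z^{-m}$, and the conditional convergence in Step 2. The latter is exactly where pairwise distinctness of the sensors is used.
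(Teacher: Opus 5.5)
Your proposal is correct and follows essentially the same route as the paper: you pair the cosine and sine modes to get $\frac{2}{\pi}j_{m,k}^{-2}|B_m|^2$, sum over radial indices with the Rayleigh sum $\sum_k j_{m,k}^{-2}=\frac{1}{4(m+1)}$ (which the paper derives from the Weierstrass product, as you suggest), and split $|B_m|^2$ into a diagonal harmonic-series part and off-diagonal parts. The off-diagonal parts are controlled by the bounded partial sums of $e^{\mathrm i m(\theta_j-\theta_l)}$ via summation by parts (your Dirichlet test), and the radial constant $(4\pi)^{-1}\bigl(\sum_j b_j\bigr)^2$ and the $\ell^2$ conclusion match the paper exactly.
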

\begin{proof}
Write $j_{m,\ell}$ for the positive zeros of $J_m$ and use cosine/sine
pairs for each nonradial eigenspace. If
$T_m(b)=\sum_{j=1}^Mb_je^{im\theta_j}$, then \eqref{czn_2d} gives
\[
|\mathbf c_{m,\ell}^{\mathrm c}\cdot b|^2+
|\mathbf c_{m,\ell}^{\mathrm s}\cdot b|^2
=\frac2\pi j_{m,\ell}^{-2}|T_m(b)|^2.
\]
The first Rayleigh sum is
\[
\sum_{\ell\ge1}j_{m,\ell}^{-2}=\frac1{4(m+1)}.
\]
For completeness, it follows by comparing the coefficient of $z^2$ in
\[
\Gamma(m+1)(2/z)^mJ_m(z)
=\prod_{\ell\ge1}(1-z^2/j_{m,\ell}^2)
=1-\frac{z^2}{4(m+1)}+O(z^4);
\]
see \cite[Eqs.~10.2.2 and~10.21.15]{NIST2010}.
The nonradial contribution to the squared coefficient sum is consequently
$(2\pi)^{-1}\sum_{m\ge1}|T_m(b)|^2/(m+1)$. Its partial sums satisfy
\[
\sum_{m=1}^N\frac{|T_m(b)|^2}{m+1}
=\sum_{j,q=1}^Mb_jb_q\sum_{m=1}^N
\frac{e^{im(\theta_j-\theta_q)}}{m+1}
=\|b\|_{\mathbb R^M}^2\log N+O(1).
\]
Indeed, the diagonal terms give the logarithmic term. For $j\ne q$,
the sensors are distinct, so the partial sums of
$e^{im(\theta_j-\theta_q)}$ are bounded; summation by parts then bounds
the corresponding weighted sums. The radial contribution is the finite
quantity $(4\pi)^{-1}(\sum_jb_j)^2$. This proves \eqref{eq:cn-b-divergence}, and in particular $\sum_n|\mathbf c_n\cdot b|^2=\infty$ for $b\ne0$.
\end{proof}

\subsection{The Hilbert scale}\label{subsec:hilbert-scale}
For $s>0$, by standard  functional calculus~\cite[Theorems~3.2 and~3.6]{Teschl2014}, the self-adjoint operator $\mathcal L$
 defines the positive self-adjoint operator
\[
L_s:=(I+\mathcal L)^{s/2},\qquad
\Dcal(L_s)=\left\{f\in X:
\sum_{n\ge1}(1+\lambda_n)^s|f_n|^2<\infty\right\},
\]
with $L_s\varphi_n=(1+\lambda_n)^{s/2}\varphi_n$. The spaces $(\Dcal(L_s),\|L_s\cdot\|_X)$, $s>0$, form the Hilbert scale generated by $\mathcal L$; each is a Hilbert space, and $L_s$ is an isometric isomorphism from it onto $X$. Its inverse is the diagonal operator
\[
L_s^{-1}f=\sum_{n\ge1}(1+\lambda_n)^{-s/2}f_n\varphi_n,
\qquad f\in X,
\]
which is bounded with $\|L_s^{-1}\|_{\mathcal L(X)}=(1+\lambda_1)^{-s/2}$.
It is also compact in the sense of Section~\ref{subsec:operators}: truncating the series to $n\le N$ gives finite-rank
operators whose distance to $L_s^{-1}$ in operator norm equals
$(1+\lambda_{N+1})^{-s/2}$, which tends to zero by
\eqref{behaviour_of_eigenvalue}. For small orders it holds that:
\begin{equation}\label{eq:Ls-domain-Hs}
\Dcal(L_s)=H^s(\Omega)\ \text{with equivalent norms},\qquad 0<s<\tfrac12,
\end{equation}
see \cite[Definition~2.1 and Theorem~2.2]{Grubb2016}. In particular, for $0<s<\tfrac12$ the domain $\Dcal(L_s)$ is dense in $X$, because $H^s(\Omega)\supseteq H^1(\Omega)\supseteq C_c^\infty(\Omega)$ \cite[Proposition~2.2]{DiNezzaPalatucciValdinoci2012}.

The unknown sources $\{p^\dagger_k\}_{k=1}^K$ are characteristic functions of Lipschitz regions (Definition~\ref{def:regions}), and no characteristic function of a set of positive finite measure belongs to $H^{1/2}(\R^2)$ \cite[Theorem~1]{Sickel2021}. However, the next lemma shows that every smaller order is compatible, so that by \eqref{eq:Ls-domain-Hs} the Hilbert-scale penalty $\|L_sp\|_X$ is finite on the sources whenever $0<s<\tfrac12$.

\begin{lemma}\label{lem:Lsdomain}
If $D\subset\Omega$ is a Lipschitz region, then
$\chi_D\in\Dcal(L_s)$ for every $0<s<1/2$. The same conclusion holds
for finite linear combinations of such characteristic functions.
\end{lemma}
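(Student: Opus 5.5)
By \eqref{eq:Ls-domain-Hs}, for $0<s<\tfrac12$ we have $\Dcal(L_s)=H^s(\Omega)$ with equivalent norms. So it suffices to show $\chi_D\in H^s(\Omega)$, i.e.\ that the Gagliardo double integral of $\chi_D$ over $\Omega\times\Omega$ is finite. We extend $\chi_D$ by zero to $\R^2$. Since $\Omega\times\Omega\subset\R^2\times\R^2$, it is enough to prove $\chi_D\in H^s(\R^2)$. Using $B^s_{2,2}(\R^2)=H^s(\R^2)$, we will prove $\chi_D\in B^{1/2}_{2,\infty}(\R^2)$ and then interpolate down to $B^s_{2,2}$ for $s<\tfrac12$.

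\textbf{Step 1 (translation estimate).} For $|h|<1$,
\[
\|\chi_D(\cdot+h)-\chi_D\|_{L^2(\R^2)}^2=|(D-h)\triangle D|\le |\{x:\operatorname{dist}(x,\partial D)\le |h|\}|.
\]
The key geometric input is that for a bounded Lipschitz region the tubular neighbourhood satisfies $|\{\operatorname{dist}(\cdot,\partial D)\le \varepsilon\}|\le C_D\,\varepsilon$ for $0<\varepsilon<1$. We will prove this by covering $\partial D$ with finitely many coordinate cylinders in which $D$ is the subgraph of a Lipschitz function $\gamma$ with constant $L$. In each cylinder, the $\varepsilon$-neighbourhood of the graph is contained in $\{|y-\gamma(x')|\le (1+L)\varepsilon\}$, whose area is at most $2(1+L)\varepsilon$ times the base length. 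Outside the union of the cylinders, $\partial D$ is absent; for $\varepsilon$ small, points within $\varepsilon$ of $\partial D$ lie in slightly shrunk cylinders, and compactness of $\partial D$ handles this. This step is the main obstacle, though it is only a standard bookkeeping argument with the local-graph definition \cite[Definition~1.2.1.1]{Grisvard2011}. Together with the translation identity, it gives $\|\chi_D(\cdot+h)-\chi_D\|_{L^2}\le C|h|^{1/2}$.

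\textbf{Step 2 (integrability).} Inserting this bound into the $B^s_{2,2}$ seminorm gives
\[
\int_{|h|<1}|h|^{-2s}\|\chi_D(\cdot+h)-\chi_D\|_{L^2}^2\frac{\dd h}{|h|^2}\le C\int_{|h|<1}|h|^{1-2s-2}\dd h=2\pi C\int_0^1\rho^{-2s}\dd\rho<\infty,
\]
which is finite precisely because $s<\tfrac12$. Hence $\chi_D\in B^s_{2,2}(\R^2)=H^s(\R^2)$, so its restriction lies in $H^s(\Omega)=\Dcal(L_s)$. This also matches the sharpness remark: the argument fails at $s=\tfrac12$, consistent with \cite[Theorem~1]{Sickel2021}.

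\textbf{Step 3 (linear combinations).} Since $\Dcal(L_s)$ is a linear space, finite linear combinations $\sum_i a_i\chi_{D_i}$ belong to it as well.
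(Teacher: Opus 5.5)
Your proof is correct and follows the same route as the paper: extend $\chi_D$ by zero, show $\chi_D\in B^{1/2}_{2,\infty}(\R^2)$, embed into $B^s_{2,2}(\R^2)=H^s(\R^2)$ for $s<\tfrac12$, restrict to $\Omega$, apply \eqref{eq:Ls-domain-Hs}, and use linearity. The only difference is that the paper cites Sickel and Yuan--Sickel--Yang for the Besov membership and the embedding, whereas you derive the membership from the translation identity and the $O(\varepsilon)$ tubular-neighbourhood bound for Lipschitz boundaries, and the embedding from the explicit radial integral $\int_0^1\rho^{-2s}\dd\rho<\infty$ (this is a direct embedding, not an interpolation).
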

\begin{proof}
Extend $\chi_D$ by zero to $\mathbb R^2$. For a bounded open set with
Lipschitz boundary, $\chi_D\in B^{1/2}_{2,\infty}(\mathbb R^2)$;
this is classical, see \cite[Theorem~2 and Corollary~1]{Sickel2021} and
\cite[Proposition~4.3]{YuanSickelYang2025}. The elementary embedding
$B^{1/2}_{2,\infty}(\mathbb R^2)\hookrightarrow B^{s}_{2,2}(\mathbb R^2)=H^s(\mathbb R^2)$
for $0<s<1/2$ \cite[Section~2]{Sickel2021} and restriction to $\Omega$ give
$\chi_D\in H^s(\Omega)$, hence $\chi_D\in\Dcal(L_s)$ by \eqref{eq:Ls-domain-Hs}.
Linearity proves the last assertion.
\end{proof}

From now on we fix $s\in(0,\tfrac12)$. By Lemma~\ref{lem:Lsdomain}, this is the range in which the characteristic sources belong to $\Dcal(L_s)$, and every statement below holds for each such $s$.

\section{Sequential reconstruction:  analysis and convergence}\label{section_main}

This section is devoted to study the forward opertor and the regularization of its associated inverse problem. We first provide a  block lower triangular representation of the 
forward operator  with respect to the time intervals: on
each interval, the contribution of the current source separates from those of
sources active earlier. We then establish
nonclosability on $L^2(\Omega)$, introduce source-side compactification, and
prove uniqueness and convergence of the sequential regularized reconstruction.

\subsection{The forward operator and its representation on each time interval}\label{subsec:forwardop}

Given a generic source sequence $(p_1,\ldots,p_K)\in\Dcal(L_s)^K$, let $u$ be the solution of \eqref{PDE_2d} with the source $\sum_{k=1}^Kp_k\chi_{I_k}$ in place of $P^\dagger$, and let $\mathcal F(p_1,\ldots,p_K)$ be the vector of its fluxes at the sensors,
\[
\mathcal F:\Dcal(L_s)^K\to Y_T,
\qquad
\mathcal F(p_1,\ldots,p_K)(t)
:=\bigl(\partial_{\mathbf n}u(z_1,t),\ldots,\partial_{\mathbf n}u(z_M,t)\bigr)^\top\in\R^M,
\qquad 0<t<T.
\]
That the fluxes are well defined and belong to $Y_T$ is part of Lemma~\ref{lem:history-decomposition} below.

The measured data $g$ in \eqref{data} are instead the fluxes produced by the exact source \eqref{source}, that is,
\[
g=\mathcal F(p_1^\dagger,\ldots,p_K^\dagger),
\qquad p_k^\dagger=\chi_{D_k},
\]
with $D_1,\ldots,D_K$ the unknown Lipschitz regions of Definition~\ref{def:regions}, and the inverse problem \eqref{inverse_problem} asks to invert $\mathcal F$ at $g$; by Lemma~\ref{lem:regions-determined}, this is the same as recovering the regions themselves.

The analysis is carried out in the eigenbasis of Section~\ref{subsec:laplacian}: every $f\in X$ is expanded as $f=\sum_nf_n\varphi_n$ with $f_n=\ip{f}{\varphi_n}_X$, and in particular the sources and the solution have the spectral coefficients
\begin{equation}\label{dkn}
p_{k,n}:=\ip{p_k}{\varphi_n}_X,
\qquad p_{k,n}^\dagger:=\ip{p_k^\dagger}{\varphi_n}_X,
\qquad u_n(t):=\ip{u(\cdot,t)}{\varphi_n}_X .
\end{equation}
The flux at a sensor $z$ is read off through the boundary coefficients \eqref{czn_2d}: formally,
$$
\partial_{\mathbf n}u(z,t)=\sum_nu_n(t)\,\partial_{\mathbf n}\varphi_n(z)=-\sum_n\lambda_nu_n(t)\,c_{z,n},
$$
and collecting the $M$ sensors with the vectors $\mathbf c_n$ of \eqref{eq:bn} gives
$$
\mathcal F(p_1,\ldots,p_K)(t)=-\sum_n\lambda_nu_n(t)\,\mathbf c_n.
$$
Lemma~\ref{lem:history-decomposition} below justifies this formula and makes its dependence on the sources explicit. 

For $f\in\Dcal(L_s)$, Cauchy--Schwarz, \eqref{eq:bn} and \eqref{eq:weyl-summability} give
\begin{equation}\label{eq:source-series-summability}
\sum_{n\ge1}\|\mathbf c_n\|_{\R^M}\,|f_n|
\le\left(C_M\sum_{n\ge1}(1+\lambda_n)^{-s}\lambda_n^{-1}\right)^{1/2}
\|L_sf\|_X<\infty,
\end{equation}
and by Lemma~\ref{lem:Lsdomain} this applies to the characteristic sources. Since the exponential factors below have modulus at most one for $t\ge0$, the series in the next definition converge absolutely and uniformly on the closed time windows whenever $f\in\Dcal(L_s)$; on $\mathcal V_{\rm fin}$ they reduce to finite sums.

\begin{lemma}\label{lem:damped-series}
Let $\tau>0$ and $\sigma\ge0$. For every $f\in X$,
\begin{equation}\label{eq:damped-series-summability}
\sum_{n\ge1}\bigl\|\mathbf c_nf_n\,e^{-\lambda_n(\cdot+\sigma)}\bigr\|_{Y_\tau}
\le\Bigl(\frac{C_M}{2}\sum_{n\ge1}\lambda_n^{-2}e^{-2\lambda_n\sigma}\Bigr)^{1/2}\|f\|_X<\infty .
\end{equation}
In particular, $\sum_n\mathbf c_nf_ne^{-\lambda_n(t+\sigma)}$ converges absolutely in $Y_\tau$.
\end{lemma}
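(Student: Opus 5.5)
Compute each term's $Y_\tau$-norm explicitly, then apply Cauchy--Schwarz over $n$ together with \eqref{eq:bn} and \eqref{eq:weyl-summability}.

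\textbf{Norm of a single term.} For fixed $n$, the $\R^M$-norm of $\mathbf c_nf_n\,e^{-\lambda_n(t+\sigma)}$ is $\|\mathbf c_n\|_{\R^M}|f_n|e^{-\lambda_n\sigma}e^{-\lambda_nt}$. Integrating in time,
\[
\bigl\|\mathbf c_nf_n\,e^{-\lambda_n(\cdot+\sigma)}\bigr\|_{Y_\tau}^2
=\|\mathbf c_n\|_{\R^M}^2|f_n|^2e^{-2\lambda_n\sigma}\int_0^\tau e^{-2\lambda_nt}\dd t
\le\|\mathbf c_n\|_{\R^M}^2|f_n|^2e^{-2\lambda_n\sigma}\,\frac{1}{2\lambda_n},
\]
because $\int_0^\tau e^{-2\lambda_nt}\dd t=(1-e^{-2\lambda_n\tau})/(2\lambda_n)\le 1/(2\lambda_n)$. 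Using $\|\mathbf c_n\|_{\R^M}^2\le C_M\lambda_n^{-1}$ from \eqref{eq:bn}, the $n$-th term has $Y_\tau$-norm at most $(C_M/2)^{1/2}\lambda_n^{-1}e^{-\lambda_n\sigma}|f_n|$.

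\textbf{Summation over $n$.} Sum these bounds and apply Cauchy--Schwarz in $\ell^2$ to the pair $(\lambda_n^{-1}e^{-\lambda_n\sigma})_n$ and $(|f_n|)_n$. By Parseval, $\sum_n|f_n|^2=\|f\|_X^2$, and this gives exactly the right-hand side of \eqref{eq:damped-series-summability}.

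\textbf{Finiteness of the constant.} Since $\sigma\ge0$, we have $e^{-2\lambda_n\sigma}\le1$. Hence $\sum_n\lambda_n^{-2}e^{-2\lambda_n\sigma}\le\sum_n\lambda_n^{-2}<\infty$ by \eqref{eq:weyl-summability} with $\kappa=2$. This holds uniformly in $\sigma$, including $\sigma=0$.

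\textbf{Convergence in $Y_\tau$.} The series $\sum_n\mathbf c_nf_ne^{-\lambda_n(\cdot+\sigma)}$ has summable norms, so it converges absolutely in the Banach space $Y_\tau$, and its partial sums are Cauchy.

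No real obstacle is expected. The only points needing care are to use the time integral, not the sup bound, to gain the extra factor $\lambda_n^{-1}$, and to confirm that the $\lambda_n^{-2}$ summability needs no positivity of $\sigma$.
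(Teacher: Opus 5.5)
Your proposal is correct and is essentially the paper's own proof. Both combine the termwise bound $\int_0^\tau e^{-2\lambda_n t}\dd t\le(2\lambda_n)^{-1}$ with \eqref{eq:bn}, then sum using Cauchy--Schwarz and Parseval, and get finiteness from \eqref{eq:weyl-summability} with $\kappa=2$, since $e^{-2\lambda_n\sigma}\le1$.
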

\begin{proof}
Since $\int_0^\tau e^{-2\lambda_nt}\dd t\le(2\lambda_n)^{-1}$, \eqref{eq:bn} gives
$\|\mathbf c_nf_n\,e^{-\lambda_n(\cdot+\sigma)}\|_{Y_\tau}^2\le\frac{C_M}{2}\lambda_n^{-2}e^{-2\lambda_n\sigma}|f_n|^2$. Taking square roots and summing, Cauchy--Schwarz yields \eqref{eq:damped-series-summability}, whose right-hand side is finite by \eqref{eq:weyl-summability}.
\end{proof}

\begin{definition}[Forward operators and history operators]\label{current-slab-and-history-op}
Let $\tau>0$. The forward operator on a time interval of length $\tau$ is the linear operator
\begin{equation}\label{eq:Ftau-def}
F_\tau:\Dcal(L_s)\subset X\to Y_\tau,
\qquad
(F_\tau f)(t):=\sum_{n\ge1}\mathbf c_nf_n\,(e^{-\lambda_nt}-1),
\qquad 0<t<\tau .
\end{equation}
For $1\le i<k\le K$, the history operator, which carries the contribution of the source active on $I_i$ into the data on $I_k$, is the linear operator
\begin{equation}\label{eq:Hki-def}
H_{k,i}:X\to Y_{\tau_k},
\qquad
(H_{k,i}f)(t):=-\sum_{n\ge1}\mathbf c_nf_n\,(1-e^{-\lambda_n\tau_i})\,e^{-\lambda_n(t+t_{k-1}-t_i)},
\qquad 0<t<\tau_k,
\end{equation}
where the series converges absolutely in $Y_{\tau_k}$ for every $f\in X$, by Lemma~\ref{lem:damped-series} with $\sigma=t_{k-1}-t_i\ge0$ and the factors $1-e^{-\lambda_n\tau_i}\in(0,1)$.
\end{definition}

For $f\in\Dcal(L_s)$, the functions $F_\tau f$ and $H_{k,i}f$ are continuous on $[0,\tau]$ and on $[0,\tau_k]$, respectively, and in particular belong to $Y_\tau$ and $Y_{\tau_k}$. 

However, the two operators behave differently with respect to the norm of $X$. The history operators $H_{k,i}$ are bounded on all of $X$ by Lemma~\ref{lem:damped-series}, indeed Hilbert--Schmidt (Proposition~\ref{prop:history-compact}). The forward operator $F_\tau$ is unbounded on its dense domain $\Dcal(L_s)$ and admits no closed extension in $X$ (Lemma~\ref{lem:Ftau-unbounded} and Proposition~\ref{prop:F-unbounded}); the symbol $F_\tau$ always refers to this operator in $X$, that is, with $\Dcal(L_s)$ carrying the norm of $X$. The same map regarded on the Hilbert space $(\Dcal(L_s),\|L_s\cdot\|_X)$ of Section~\ref{subsec:hilbert-scale} is denoted by $F_{\tau,s}$; it is bounded (Proposition~\ref{prop:F-unbounded}(iii)), and it is through $F_{\tau,s}$ that the regularization is set up in Section~\ref{subsec:compactification}.

\begin{remark}[On the domain of $F_\tau$]\label{rem:Ftau-domain}
Let us  rewrite \eqref{eq:Ftau-def} as the following difference:
\begin{equation}\label{eq:Ftau-splitting}
(F_\tau f)(t)=\sum_{n\ge1}\mathbf c_nf_n\,e^{-\lambda_nt}-\sum_{n\ge1}\mathbf c_nf_n .
\end{equation}
The first series converges absolutely in $Y_\tau$ for every $f\in X$ by Lemma~\ref{lem:damped-series} with $\sigma=0$. Comparing partial sums in~\eqref{eq:Ftau-splitting}, and identifying vectors in $\R^M$ with time-constant elements of $Y_\tau$, we see that the series in \eqref{eq:Ftau-def} converges in $Y_\tau$ exactly on
\[
\Dcal_{\max}:=\Bigl\{f\in X:\ \sum_{n\ge1}\mathbf c_nf_n\ \text{converges in }\R^M\Bigr\}.
\]
This is a dense subspace of $X$, since it contains $\mathcal V_{\rm fin}$, but it is proper. Indeed, fix $b\in\R^M\setminus\{0\}$. If $\Dcal_{\max}=X$, then $\sum_nf_n(\mathbf c_n\cdot b)$ would converge for every $f\in X$. Then the uniform boundedness principle would then give $(\mathbf c_n\cdot b)_n\in\ell^2$, contradicting Lemma~\ref{lem:cn-divergence}.

The proofs of Lemma~\ref{lem:Ftau-unbounded} and Proposition~\ref{prop:F-unbounded}(i)--(ii) below also apply to the restriction of $F_\tau$ to $\mathcal V_{\rm fin}$. Consequently, the operator defined by \eqref{eq:Ftau-def} on any domain $\Dcal$ with $\mathcal V_{\rm fin}\subset\Dcal\subset\Dcal_{\max}$ is unbounded and not closable with respect to the norm of $X$. 

As natural choice we choose $\Dcal(F_\tau)=\Dcal(L_s)$; the inclusion $\Dcal(L_s)\subset\Dcal_{\max}$ follows from \eqref{eq:source-series-summability}. This domain contains the characteristic sources (Lemma~\ref{lem:Lsdomain}), and the same estimate gives absolute and uniform convergence in $t\in[0,\tau]$ for every $f\in\Dcal(L_s)$. Equipped with the norm $\|L_s\cdot\|_X$, this domain is the Hilbert space on which the regularization is set up in Section~\ref{subsec:compactification}.
\end{remark}

The following lemma establishes the block lower triangular representation of the forward map $\mathcal F$ announced in \eqref{eq:intro-decomposition}, separating the contribution of the current source from those of earlier sources.

\begin{lemma}\label{lemma_forward_operator}\label{lem:history-decomposition}
For a source sequence $(p_1,\ldots,p_K)\in\Dcal(L_s)^K$, the fluxes at the sensors are well defined, and
on the $k$-th interval they satisfy
\begin{equation}\label{forward_2d}
\begin{aligned}
\mathcal F(p_1,\ldots,p_K)(t_{k-1}+t)
={}&-\sum_{i<k}\sum_{n\ge1}\mathbf c_np_{i,n}
(1-e^{-\lambda_n\tau_i})e^{-\lambda_n(t+t_{k-1}-t_i)}\\
&+\sum_{n\ge1}\mathbf c_np_{k,n}(e^{-\lambda_nt}-1),
\qquad 0<t<\tau_k,
\end{aligned}
\end{equation}
where the series converge absolutely and uniformly on $[0,\tau_k]$; in particular, $\mathcal F(p_1,\ldots,p_K)\in Y_T$. Consequently, with $v_k\in Y_{\tau_k}$, $v_k(t):=g(t_{k-1}+t)$ for $0<t<\tau_k$, the exact data satisfy the block lower triangular system
\begin{equation}\label{eq:block-triangular}
\begin{pmatrix} v_1\\ v_2\\ \vdots\\ v_K\end{pmatrix}
=
\begin{pmatrix}
F_{\tau_1} & & & \\
H_{2,1} & F_{\tau_2} & & \\
\vdots & & \ddots & \\
H_{K,1} & H_{K,2} & \cdots & F_{\tau_K}
\end{pmatrix}
\begin{pmatrix} p_1^\dagger\\ p_2^\dagger\\ \vdots\\ p_K^\dagger\end{pmatrix},
\end{equation}
that is,
\begin{equation}\label{eq:history-decomposition}
v_k=\sum_{i<k}H_{k,i}p_i^\dagger+F_{\tau_k}p_k^\dagger,
\qquad k=1,\ldots,K,
\end{equation}
where the sum is empty for $k=1$ because $u(\cdot,0)=0$. Forward substitution in \eqref{eq:block-triangular} isolates the current source,
\begin{equation}\label{eq:shifted-data-problem}
F_{\tau_k}p_k^\dagger=\widetilde v_k,
\qquad \widetilde v_1:=v_1,
\qquad \widetilde v_k:=v_k-\sum_{i<k}H_{k,i}p_i^\dagger\quad(k\ge2).
\end{equation}
\end{lemma}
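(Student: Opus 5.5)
We will work in the eigenbasis of Section~\ref{subsec:laplacian} and construct $u$ spectrally. Put $P(x,t):=\sum_kp_k(x)\chi_{I_k}(t)$. Since each $p_k\in\Dcal(L_s)\subset X$, we have $P\in L^\infty(0,T;X)$, so \eqref{PDE_2d} has a unique mild (weak) solution $u\in C([0,T];X)$. Its coefficients $u_n(t)=\ip{u(\cdot,t)}{\varphi_n}_X$ solve $u_n'+\lambda_nu_n=\sum_ip_{i,n}\chi_{I_i}$ with $u_n(0)=0$, by testing against $\varphi_n$. Variation of constants on each interval then gives the formula displayed in the Introduction: for $0<t<\tau_k$,
\[
\lambda_nu_n(t_{k-1}+t)=p_{k,n}(1-e^{-\lambda_nt})+\sum_{i<k}p_{i,n}(1-e^{-\lambda_n\tau_i})e^{-\lambda_n(t+t_{k-1}-t_i)}.
\]
This follows by induction on $k$, using $u_n(t_i)=\lambda_n^{-1}\sum_{i'\le i}p_{i',n}(1-e^{-\lambda_n\tau_{i'}})e^{-\lambda_n(t_i-t_{i'})}$ and propagating with the homogeneous factor $e^{-\lambda_n t}$ plus the Duhamel integral of the constant $p_{k,n}$.

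The main step, and the expected obstacle, is justifying the termwise normal derivative $\partial_{\mathbf n}u(z,t)=-\sum_n\lambda_nu_n(t)c_{z,n}$, i.e.\ that point fluxes exist. The plan is to show that the series $\sum_n\lambda_nu_n(t)\mathbf c_n$ converges absolutely and uniformly on $[0,\tau_k]$, using the bound $|\lambda_nu_n(t_{k-1}+t)|\le |p_{k,n}|+\sum_{i<k}|p_{i,n}|$ together with \eqref{eq:source-series-summability}. The same bound shows that the series $\sum_n\lambda_n u_n(t)\varphi_n$ converges in a norm controlling boundary traces of the gradient on the relevant set.

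Concretely, we identify the flux as the limit of the fluxes of the Galerkin truncations $u^N:=\sum_{n\le N}u_n\varphi_n$. Each $u^N$ is smooth up to $\partial\Omega$ (Bessel modes), and its flux at $z_j$ equals $-\sum_{n\le N}\lambda_nu_n c_{z_j,n}$ by the definition \eqref{czn_2d}. By the absolute summability above, these partial sums converge uniformly in $t$. We therefore define the flux of $u$ at $z_j$ as this uniform limit, consistently with how \eqref{data} is understood for the data. If a more intrinsic justification is wanted, one can note that $u-u^N\to0$ in $C([0,T];X)$ while the truncated fluxes converge uniformly, so the limit is independent of the approximation. The uniform limit of continuous functions is continuous on each closed window, hence square integrable, which gives $\mathcal F(p_1,\ldots,p_K)\in Y_T$.

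Given the absolute convergence, we may split the series into the $K$ sub-series termwise, and this yields \eqref{forward_2d}. Comparing with Definition~\ref{current-slab-and-history-op}, the $i$-th sub-series is exactly $H_{k,i}p_i$ and the last one is $F_{\tau_k}p_k$. Here the history series also converge absolutely in $Y_{\tau_k}$ by Lemma~\ref{lem:damped-series}. Specializing to $p_k=p_k^\dagger$, which belong to $\Dcal(L_s)$ by Lemma~\ref{lem:Lsdomain}, and using $g=\mathcal F(p_1^\dagger,\ldots,p_K^\dagger)$ gives \eqref{eq:history-decomposition}, i.e.\ the block system \eqref{eq:block-triangular}. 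For $k=1$ the history sum is empty because $u_n(0)=0$. Finally, \eqref{eq:shifted-data-problem} is immediate: subtract the history terms, which are well defined elements of $Y_{\tau_k}$ because each $H_{k,i}$ is defined on all of $X$.
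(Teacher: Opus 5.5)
Your spectral computation, the coefficient bound $|\lambda_nu_n(t_{k-1}+t)|\le\sum_{i\le k}|p_{i,n}|$, the absolute and uniform convergence of the flux series via \eqref{eq:source-series-summability}, and the splitting into $H_{k,i}p_i$ and $F_{\tau_k}p_k$ all agree with the paper. The gap is in the step you yourself single out: proving that the pointwise normal derivative $\partial_{\mathbf n}u(z_j,t)$ of the actual solution exists and equals the series.

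Defining the flux as the uniform limit of the truncated fluxes is circular. Both $\mathcal F$ and the data \eqref{data} are defined through $\partial_{\mathbf n}u(z_j,t)$, and the well-definedness of exactly this quantity is what the lemma asserts. Your fallback justification also fails, because convergence $u^N\to u$ in $C([0,T];X)$ says nothing about point fluxes. Take $\psi_N(r,\theta):=(r-1)\,\varrho(N(1-r))$ with $\varrho\in C_c^\infty(-1,1)$, $0\le\varrho\le1$ and $\varrho=1$ near $0$. Then:
\begin{itemize}
\item $\psi_N$ is smooth on $\overline\Omega$, since it vanishes for $r\le1-1/N$;
\item $\psi_N=0$ and $\partial_{\mathbf n}\psi_N=1$ on $\partial\Omega$;
\item $\|\psi_N\|_X^2\le 2\pi/(3N^3)$.
\end{itemize}
Hence $u^N+\psi_N\to u$ in $C([0,T];X)$, while its fluxes converge uniformly to a limit shifted by $1$. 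This is the same phenomenon that underlies the nonclosability in Proposition~\ref{prop:F-unbounded}(ii). Your remark about ``a norm controlling boundary traces of the gradient'' points the right way but is never made precise. The obvious candidate is too weak: convergence of $\sum_n\lambda_nu_n\varphi_n$ in $X$ only gives $u^N\to u$ in $H^2(\Omega)$, whose normal traces lie in $H^{1/2}(\partial\Omega)$, and this space does not embed into $C(\partial\Omega)$.

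The missing ingredient is to use the extra smoothness $s>0$ at the level of $u$, not only in the flux series. Since $(1+\lambda_n)^{s+2}\lambda_n^{-2}\le(1+\lambda_1^{-1})^2(1+\lambda_n)^s$, your coefficient bound gives
$\sup_{0\le t\le\tau_k}\sum_n(1+\lambda_n)^{s+2}|u_n(t_{k-1}+t)|^2\le(1+\lambda_1^{-1})^2K\sum_{i\le k}\norm{L_sp_i}_X^2$.
The same estimate on the tails $n>N$ shows $u^N\to u$ in $\Dcal(L_{s+2})\hookrightarrow H^{s+2}(\Omega)$, uniformly in $t\in[0,\tau_k]$. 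The normal derivative maps $H^{s+2}(\Omega)$ continuously into $H^{s+1/2}(\partial\Omega)\hookrightarrow C(\partial\Omega)$, because $s>0$ and $\partial\Omega$ is one-dimensional. So the truncated fluxes converge uniformly on $\partial\Omega$ to the genuine normal derivative of $u$. Hence the flux exists at every sensor and coincides with your series. This is how the paper closes the argument, and with it inserted the remainder of your proposal goes through.
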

\begin{proof}
The spectral coefficients $u_n$ in \eqref{dkn} solve
\[
u_n'(t)+\lambda_nu_n(t)=\sum_i p_{i,n}\chi_{I_i}(t),
\qquad u_n(0)=0.
\]
Variation of constants gives, for $0<t<\tau_k$,
\[
u_n(t_{k-1}+t)=\frac1{\lambda_n}
\left[\sum_{i<k}p_{i,n}(1-e^{-\lambda_n\tau_i})
e^{-\lambda_n(t+t_{k-1}-t_i)}+p_{k,n}(1-e^{-\lambda_nt})\right].
\]
Taking the outward normal derivative of the finite spectral sums and using
\eqref{czn_2d} yields \eqref{forward_2d}. Estimate
\eqref{eq:source-series-summability} makes the flux series absolutely and
uniformly convergent, and justifies the finite sum over intervals and the time
integrations. The spectral sums for $u(\cdot,t_{k-1}+t)$ also converge
uniformly in $t\in[0,\tau_k]$ in $\Dcal(L_{s+2})$, which embeds continuously
into $H^{s+2}(\Omega)$ \cite[Theorem~2.2]{Grubb2016}.
Indeed, the preceding formula gives
$|u_n(t_{k-1}+t)|\le\lambda_n^{-1}\sum_{i\le k}|p_{i,n}|$ uniformly in
$t\in[0,\tau_k]$, and since $\lambda_n\ge\lambda_1>0$,
$(1+\lambda_n)^{s+2}\lambda_n^{-2}\le(1+\lambda_1^{-1})^2(1+\lambda_n)^s$.
Squaring the coefficient bound and applying Cauchy--Schwarz to the finite
sum over $i$ gives
\[
\sup_{0\le t\le\tau_k}\sum_n(1+\lambda_n)^{s+2}|u_n(t_{k-1}+t)|^2
\le(1+\lambda_1^{-1})^2\,K\sum_{i\le k}\sum_n(1+\lambda_n)^s|p_{i,n}|^2<\infty,
\]
since $p_i\in\Dcal(L_s)$. Applying the same estimate with the spectral
sums restricted to $n>N$ shows that the tails tend to zero in
$\Dcal(L_{s+2})$, uniformly in $t\in[0,\tau_k]$.
The trace theorem and the Sobolev embedding, applied in local boundary
coordinates, give the continuous maps
\[
H^{s+2}(\Omega)\xrightarrow{\;\partial_{\mathbf n}\;}
H^{s+1/2}(\partial\Omega)\hookrightarrow C(\partial\Omega),
\]
since $\partial\Omega$ is one-dimensional and $s+1/2>1/2$; see
\cite[Proposition~3.8 and Theorem~8.2]{DiNezzaPalatucciValdinoci2012}.
Consequently, the normal derivatives of the finite spectral sums converge
uniformly on the boundary to $\partial_{\mathbf n}u$, identifying the flux
series with the pointwise normal derivative at every sensor.
This also agrees with the representation in \cite{RundellZhang2020,LinZhangZhang2022}.
Substitution of $p_i=p_i^\dagger$ gives
\eqref{eq:block-triangular}, \eqref{eq:history-decomposition} and \eqref{eq:shifted-data-problem}.
\end{proof}

\subsection{Compactness of history operators and nonclosability of $F_\tau$}\label{subsec:splitting}

The history operators and the current-interval forward operator have different mapping properties
with respect to the norm of $X$. The former are compact operators on $X$, while the latter has no
closed extension in $X$.

\begin{proposition}\label{prop:history-compact}
For $1\le i<k\le K$, $H_{k,i}:X\to Y_{\tau_k}$ is a Hilbert--Schmidt operator.
\end{proposition}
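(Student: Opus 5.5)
The plan is to verify the Hilbert--Schmidt condition directly in the orthonormal eigenbasis $\{\varphi_n\}_{n\ge1}$ of $X$, using the definition recalled in Section~\oldref{subsec:operators}. Boundedness of $H_{k,i}:X\to Y_{\tau_k}$ is already available from Lemma~\oldref{lem:damped-series} with $\sigma=t_{k-1}-t_i\ge0$, together with $0<1-e^{-\lambda_n\tau_i}<1$. So it only remains to show that $\sum_n\|H_{k,i}\varphi_n\|_{Y_{\tau_k}}^2<\infty$.

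First I will compute the image of a single basis vector. Since $(\varphi_n)_m=\delta_{nm}$, the series \eqref{eq:Hki-def} collapses to one term:
\[
(H_{k,i}\varphi_n)(t)=-\mathbf c_n(1-e^{-\lambda_n\tau_i})e^{-\lambda_n(t+t_{k-1}-t_i)},\qquad 0<t<\tau_k .
\]
Next I bound its norm. Using $|1-e^{-\lambda_n\tau_i}|\le1$, $e^{-2\lambda_n(t_{k-1}-t_i)}\le1$, $\int_0^{\tau_k}e^{-2\lambda_nt}\dd t\le(2\lambda_n)^{-1}$ and \eqref{eq:bn}, I get
\[
\|H_{k,i}\varphi_n\|_{Y_{\tau_k}}^2\le\|\mathbf c_n\|_{\R^M}^2\frac{1}{2\lambda_n}\le\frac{C_M}{2}\lambda_n^{-2}.
\]

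Summing over $n$ and invoking \eqref{eq:weyl-summability} with $\kappa=2$ then gives
\[
\|H_{k,i}\|_{\rm HS}^2\le\frac{C_M}{2}\sum_n\lambda_n^{-2}<\infty .
\]
By the recalled fact that finiteness of this sum for one orthonormal basis suffices, $H_{k,i}$ is Hilbert--Schmidt.

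There is no real obstacle here. The only point to check is that the operator is well defined and bounded on all of $X$, so that the Hilbert--Schmidt sum over a basis is meaningful, and Lemma~\oldref{lem:damped-series} supplies this. It is worth noting that the estimate does not even need the gap $t_{k-1}-t_i>0$: the time integration alone provides the extra factor $\lambda_n^{-1}$ needed for summability. This is in contrast with $F_\tau$, whose time-constant part $-\sum_n\mathbf c_nf_n$ lacks such decay.
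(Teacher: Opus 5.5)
Your proposal is correct and follows the paper's proof essentially verbatim: the single-term image of $\varphi_n$, the bound via $\int_0^{\tau_k}e^{-2\lambda_nt}\dd t\le(2\lambda_n)^{-1}$ and \eqref{eq:bn}, summability by \eqref{eq:weyl-summability}, boundedness from Lemma~\oldref{lem:damped-series}, and the one-basis Hilbert--Schmidt criterion. The only difference is that the paper keeps the factor $e^{-2\lambda_n(t_{k-1}-t_i)}$ in the bound rather than discarding it, because it reuses that factor for the exponential decay estimate in Proposition~\oldref{prop:history-decay-quantitative}.
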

\begin{proof}
For a basis vector $\varphi_n$ the series \eqref{eq:Hki-def} has a single term, and \eqref{eq:bn} gives
\begin{equation*}
\begin{aligned}
\|H_{k,i}\varphi_n\|_{Y_{\tau_k}}^2
&=\|\mathbf c_n\|_{\R^M}^2(1-e^{-\lambda_n\tau_i})^2
 e^{-2\lambda_n(t_{k-1}-t_i)}\int_0^{\tau_k}e^{-2\lambda_nt}\dd t\\
&\le\frac{C_M}{2}\lambda_n^{-2}e^{-2\lambda_n(t_{k-1}-t_i)},
\end{aligned}
\end{equation*}
which is summable in $n$ by \eqref{eq:weyl-summability}. Since $H_{k,i}$ is bounded on $X$ by Lemma~\ref{lem:damped-series}, the
Hilbert--Schmidt criterion \cite[Lemma~6.11]{Teschl2014} completes the proof.
\end{proof}

\begin{lemma}\label{lem:Ftau-unbounded}
Let the sensors be pairwise distinct and let $\tau>0$. The operator $F_\tau:\Dcal(L_s)\subset X\to Y_\tau$ of Definition~\ref{current-slab-and-history-op} is densely defined and unbounded.
\end{lemma}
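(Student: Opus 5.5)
Density is immediate: $\Dcal(L_s)$ is dense in $X$ by \eqref{eq:Ls-domain-Hs}, and in fact it contains $\mathcal V_{\rm fin}$, which is already dense since $\{\varphi_n\}$ is an orthonormal basis. The real work is unboundedness.

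The approach is to use the splitting \eqref{eq:Ftau-splitting}, $F_\tau f=E f-\Sigma f$. Here $(Ef)(t):=\sum_n\mathbf c_nf_ne^{-\lambda_nt}$ is bounded from $X$ to $Y_\tau$ by Lemma~\ref{lem:damped-series} with $\sigma=0$. The term $\Sigma f:=\sum_n\mathbf c_nf_n\in\R^M$ is a time-constant element of $Y_\tau$ with $\|\Sigma f\|_{Y_\tau}=\sqrt\tau\,\|\Sigma f\|_{\R^M}$. Hence it suffices to show that the linear functional $f\mapsto b\cdot\Sigma f=\sum_nf_n(\mathbf c_n\cdot b)$ is unbounded on $\mathcal V_{\rm fin}\subset\Dcal(L_s)$ for some fixed $b\neq0$, say $b=e_1$. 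If $F_\tau$ were bounded, then $\Sigma=E-F_\tau$ would be bounded, since $\|\Sigma f\|_{Y_\tau}\le\|E\|\|f\|_X+\|F_\tau f\|_{Y_\tau}$. That would contradict the unboundedness of the functional.

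For the functional, fix $N$ and choose the finite test function $f^{(N)}:=\sum_{n\le N}(\mathbf c_n\cdot b)\varphi_n\in V_N$. Then
\[
\|f^{(N)}\|_X^2=\sum_{n\le N}|\mathbf c_n\cdot b|^2=:S_N,
\qquad
b\cdot\Sigma f^{(N)}=S_N,
\]
so the ratio equals $S_N^{1/2}$. By Lemma~\ref{lem:cn-divergence}, $(\mathbf c_n\cdot b)_n\notin\ell^2$ for $b\ne0$, so $S_N\to\infty$. Normalizing, set $g_N:=f^{(N)}/S_N^{1/2}$. Then $\|g_N\|_X=1$, $\|\Sigma g_N\|_{\R^M}\ge S_N^{1/2}/\|b\|\to\infty$, and $\|Eg_N\|_{Y_\tau}$ stays bounded. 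Therefore $\|F_\tau g_N\|_{Y_\tau}\ge\sqrt\tau\,S_N^{1/2}/\|b\|-\|E\|\to\infty$.

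The only delicate point is that Lemma~\ref{lem:cn-divergence} is stated for sums over angular order $m_n\le N$ rather than over $n\le N$. The conclusion I need, however, is only non-square-summability of $(\mathbf c_n\cdot b)_n$, which the lemma gives in its final sentence. So partial sums over $n\le N$ diverge as well, and no ordering issue arises. Membership of $f^{(N)}$ in $\Dcal(L_s)$ is trivial because it is a finite combination of eigenfunctions. Because this construction uses only $\mathcal V_{\rm fin}$, it also covers the remark that unboundedness holds on any domain between $\mathcal V_{\rm fin}$ and $\Dcal_{\max}$.
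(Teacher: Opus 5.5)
Your proof is correct and uses the same ingredients as the paper's: finite test functions built from $(\mathbf c_n\cdot b)_n$ along a fixed time-constant direction $b$, with Lemma~\ref{lem:cn-divergence} supplying the divergence. The difference is only in how the damped part is handled. The paper pairs $F_\tau f$ with $y\equiv b$, so the damped part is absorbed into the weights $\gamma_n=\int_0^\tau(e^{-\lambda_nt}-1)\dd t$, which satisfy $|\gamma_n|\ge\tau/2$ for all but finitely many $n$. You instead split the damped part off via \eqref{eq:Ftau-splitting} and Lemma~\ref{lem:damped-series}, which makes explicit that the unboundedness comes entirely from the constant part $\Sigma$.
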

\begin{proof}
The domain $\Dcal(L_s)$ is dense in $X$ (Section~\ref{subsec:hilbert-scale}). Fix $b\in\R^M$, $b\ne0$, and let $y\equiv b$ be the corresponding time-constant element of $Y_\tau$, $\|y\|_{Y_\tau}=\sqrt\tau\,\|b\|_{\R^M}$. For $f\in\mathcal V_{\rm fin}$, \eqref{eq:Ftau-def} gives
\[
\ip{F_\tau f}{y}_{Y_\tau}=\sum_nf_n\gamma_n\,(\mathbf c_n\cdot b),
\qquad
\gamma_n:=\int_0^\tau(e^{-\lambda_nt}-1)\dd t=\frac{1-e^{-\lambda_n\tau}}{\lambda_n}-\tau,
\]
and $|\gamma_n|\ge\tau/2$ whenever $\lambda_n\ge2/\tau$, which excludes only finitely many $n$. For $N\in\mathbb N$ let
\[
f_N:=\sum_{n=1}^{N}\gamma_n(\mathbf c_n\cdot b)\varphi_n\in V_N\subset\mathcal V_{\rm fin}\subset\Dcal(L_s).
\]
By Parseval's identity,
\[
\|f_N\|_X^2=\sum_{n=1}^{N}\gamma_n^2|\mathbf c_n\cdot b|^2
\ge\frac{\tau^2}{4}\sum_{n\le N:\ \lambda_n\ge2/\tau}|\mathbf c_n\cdot b|^2
\longrightarrow\infty
\qquad(N\to\infty),
\]
because $\sum_{n\ge1}|\mathbf c_n\cdot b|^2=\infty$ by Lemma~\ref{lem:cn-divergence} and only finitely many terms are dropped. Moreover $\ip{F_\tau f_N}{y}_{Y_\tau}=\sum_{n=1}^{N}\gamma_n^2|\mathbf c_n\cdot b|^2=\|f_N\|_X^2$, so Cauchy--Schwarz gives, for every $N$ with $f_N\ne0$,
\[
\frac{\|F_\tau f_N\|_{Y_\tau}}{\|f_N\|_X}
\ge\frac{\|f_N\|_X}{\|y\|_{Y_\tau}}
=\frac{\|f_N\|_X}{\sqrt\tau\,\|b\|_{\R^M}}
\longrightarrow\infty .
\]
Hence $F_\tau$ is unbounded.
\end{proof}

\begin{proposition}[Non closability of $F_\tau$]\label{prop:F-unbounded}
Let the sensors be pairwise distinct and let $\tau>0$, and consider the densely defined operator $F_\tau:\Dcal(L_s)\subset X\to Y_\tau$ of Definition~\ref{current-slab-and-history-op}.
\begin{enumerate}
\item[(i)] The domain of its adjoint is
\begin{equation}\label{eq:Ftau-adjoint-domain}
\Dcal(F_\tau^*)=\Bigl\{y\in Y_\tau:\int_0^\tau y(t)\dd t=0\in\mathbb R^M\Bigr\},
\end{equation}
which is not dense in $Y_\tau$.
\item[(ii)]  $F_\tau$ is not closable. 
\item[(iii)] The same map, regarded as an operator $F_{\tau,s}:(\Dcal(L_s),\|L_s\cdot\|_X)\to Y_\tau$ on the Hilbert space of Section~\ref{subsec:hilbert-scale}, is bounded.
\end{enumerate}
\end{proposition}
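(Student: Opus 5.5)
For (i), I will compute the adjoint directly. Take $y\in Y_\tau$ and write $\bar y:=\int_0^\tau y(t)\dd t\in\R^M$. For $f\in\Dcal(L_s)$, the series \eqref{eq:Ftau-def} converges uniformly on $[0,\tau]$ by \eqref{eq:source-series-summability}, so we may integrate term by term:
\[
\ip{F_\tau f}{y}_{Y_\tau}=\sum_n f_n\,\beta_n(y)-\sum_n f_n\,(\mathbf c_n\cdot\bar y),
\qquad
\beta_n(y):=\int_0^\tau e^{-\lambda_nt}\,\mathbf c_n\cdot y(t)\dd t .
\]
By Cauchy--Schwarz, $|\beta_n(y)|\le\|\mathbf c_n\|_{\R^M}(2\lambda_n)^{-1/2}\|y\|_{Y_\tau}\le C\lambda_n^{-1}\|y\|_{Y_\tau}$. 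Hence $(\beta_n(y))_n\in\ell^2$, and the first term is a bounded functional of $f\in X$, as in Lemma~\ref{lem:damped-series}.

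Suppose first that $\bar y=0$. Then $f\mapsto\ip{F_\tau f}{y}$ is bounded, so $y\in\Dcal(F_\tau^*)$ with $F_\tau^*y=\sum_n\beta_n(y)\varphi_n$. Conversely, suppose $y\in\Dcal(F_\tau^*)$. Then $f\mapsto\sum_nf_n(\mathbf c_n\cdot\bar y)$ is bounded on the dense set $\mathcal V_{\rm fin}\subset\Dcal(L_s)$. Testing with $f=\sum_{n\le N}(\mathbf c_n\cdot\bar y)\varphi_n$ gives $\sum_{n\le N}|\mathbf c_n\cdot\bar y|^2\le C\bigl(\sum_{n\le N}|\mathbf c_n\cdot\bar y|^2\bigr)^{1/2}$. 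So $(\mathbf c_n\cdot\bar y)_n\in\ell^2$, and Lemma~\ref{lem:cn-divergence} forces $\bar y=0$. This proves \eqref{eq:Ftau-adjoint-domain}.

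Non-density of this domain follows because it is the kernel of the nonzero bounded map $y\mapsto\bar y$ from $Y_\tau$ to $\R^M$. It is therefore a closed proper subspace, and its orthogonal complement contains the time-constant functions. The main point is the converse direction in (i), which rests entirely on Lemma~\ref{lem:cn-divergence}.

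Part (ii) is then immediate from the criterion recalled in Section~\ref{subsec:operators}: a densely defined operator is closable if and only if $\Dcal(A^*)$ is dense. As a more concrete alternative, I could exhibit the sequence mentioned in the introduction. Take $f_N=\sum_{n\le N}(\mathbf c_n\cdot b)\varphi_n/S_N$ with $S_N:=\sum_{n\le N}|\mathbf c_n\cdot b|^2\to\infty$. Then $\|f_N\|_X=S_N^{-1/2}\to0$, and $F_\tau f_N\to -\,\Pi b$ in $Y_\tau$, where $\Pi b$ is a nonzero constant. This is the orthogonal projection onto the span of $b$ of $\sum_{n\le N}\mathbf c_n(\mathbf c_n\cdot b)/S_N$, while the damped part tends to zero by Lemma~\ref{lem:damped-series}. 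The abstract route is shorter, so I would use it.

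For (iii), I take $f\in\Dcal(L_s)$ and bound pointwise in $t$, using $|e^{-\lambda_nt}-1|\le1$:
\[
\|(F_\tau f)(t)\|_{\R^M}\le\sum_n\|\mathbf c_n\|_{\R^M}|f_n|.
\]
By \eqref{eq:source-series-summability} this is at most $\bigl(C_M\sum_n(1+\lambda_n)^{-s}\lambda_n^{-1}\bigr)^{1/2}\|L_sf\|_X$, which is finite by \eqref{eq:weyl-summability} since $s>0$. Squaring and integrating over $(0,\tau)$ gives $\|F_{\tau,s}f\|_{Y_\tau}\le\sqrt\tau\,C\,\|L_sf\|_X$, so $F_{\tau,s}$ is bounded.
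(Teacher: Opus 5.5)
Your proof is correct and follows the paper's argument essentially step for step: termwise integration splits $\ip{F_\tau f}{y}_{Y_\tau}$ into an $\ell^2$ damped part and the coefficients $\mathbf c_n\cdot\bar y$, Lemma~\ref{lem:cn-divergence} forces $\bar y=0$, the kernel of the integration map gives non-density, the adjoint-domain criterion gives (ii), and \eqref{eq:source-series-summability} gives (iii). The concrete sequence in your aside is not needed and, as written, is under-justified: the constant part of $F_\tau f_N$ is the full vector $S_N^{-1}\sum_{n\le N}\mathbf c_n(\mathbf c_n\cdot b)$, whose component along $b$ is always $b/\|b\|_{\R^M}^2$, so convergence to that value also requires the components orthogonal to $b$ to vanish; for truncation by angular order, this follows from polarizing \eqref{eq:cn-b-divergence}.
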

\begin{proof}
Let $y\in Y_\tau$ and put $b_y=\int_0^\tau y(t)\dd t$. For $f\in\Dcal(L_s)$, the series \eqref{eq:Ftau-def} converges uniformly on $[0,\tau]$, so termwise integration gives
\[
\ip{F_\tau f}{y}_{Y_\tau}=\sum_nf_n\beta_n(y),\qquad
\beta_n(y)=-\mathbf c_n\cdot b_y+\rho_n(y),\qquad
\rho_n(y)=\int_0^\tau\mathbf c_n\cdot y(t)e^{-\lambda_nt}\dd t.
\]
By the definition of the adjoint, $y\in\Dcal(F_\tau^*)$ if and only if $f\mapsto\sum_nf_n\beta_n(y)$ is bounded on $\Dcal(L_s)$ with respect to $\|\cdot\|_X$; by Parseval's identity, testing on $f=\sum_{n\le N}\beta_n(y)\varphi_n\in V_N$, this holds if and only if $(\beta_n(y))\in\ell^2$.
Cauchy--Schwarz yields
\[
|\rho_n(y)|^2\le\frac{C_M}{2}\lambda_n^{-2}\|y\|_{Y_\tau}^2.
\]
Thus $(\rho_n(y))\in\ell^2$, and it remains to determine when
$(\mathbf c_n\cdot b)\in\ell^2$ for $b\in\mathbb R^M$.

By Lemma~\ref{lem:cn-divergence}, $(\mathbf c_n\cdot b)\in\ell^2$ if and only if $b=0$, which proves
\eqref{eq:Ftau-adjoint-domain}.

The integration map $y\mapsto\int_0^\tau y(t)\dd t$ is bounded and onto
$\mathbb R^M$. Its kernel is closed of codimension $M$, and its orthogonal
complement consists of the time-constant functions; so $\Dcal(F_\tau^*)$
is not dense, which proves (i). The adjoint-domain criterion
\cite[Lemma~2.4(ii)]{Teschl2014} then gives nonclosability, which proves (ii). In particular, $F_\tau$ has no closed extension in $X\times Y_\tau$, and no bounded extension to all of $X$, since such an extension would be closed.
Finally, for $f\in\Dcal(L_s)$, \eqref{eq:Ftau-def} and \eqref{eq:source-series-summability} give
\[
\|F_{\tau,s}f\|_{Y_\tau}
\le\sqrt\tau\sum_{n\ge1}\|\mathbf c_n\|_{\R^M}\,|f_n|
\le C_{\tau,s,M}\|L_sf\|_X,
\qquad
C_{\tau,s,M}:=\Bigl(\tau C_M\sum_{n\ge1}(1+\lambda_n)^{-s}\lambda_n^{-1}\Bigr)^{1/2},
\]
so $F_{\tau,s}$ is bounded on $(\Dcal(L_s),\|L_s\cdot\|_X)$. This proves (iii).
\end{proof}

Proposition~\ref{prop:F-unbounded} shows a direct obstruction in applying  standard regularization theory for unbounded operators (see \cite{Groetsch1992spectral,groetsch2007stable}), which rely exactly on the closability of the forward map.

By Proposition~\ref{prop:F-unbounded}(i) and the graph characterization in \cite[proof of Lemma~2.4(ii)]{Teschl2014}, every nonzero time-constant datum $y\equiv b$ admits a sequence $f_j\in\Dcal(L_s)$ such that $f_j\to0$ in $X$ and $F_\tau f_j\to y$ in $Y_\tau$. Hence, for every $\alpha>0$, the Tikhonov functional
\begin{equation*}
p\longmapsto\tfrac12\|F_\tau p-y\|_{Y_\tau}^2+\tfrac\alpha2\|p\|_X^2,
\qquad p\in\Dcal(L_s),
\end{equation*}
has infimum zero but no minimizer: it tends to zero along $(f_j)$, whereas a zero value would force $p=0$ and then $y=0$.

To overcome such obstruction we exploit Proposition~\ref{prop:F-unbounded}-(iii). On the Hilbert space $(\Dcal(L_s),\|L_s\cdot\|_X)$ the operator is bounded, so measuring the source in this stronger norm, which is Hilbert-scale regularization, restores a well-posed Tikhonov problem. This is carried out in Section~\ref{subsec:compactification}.

\subsection{Source-side compactification}\label{subsec:compactification}

Since $L_s^{-1}$ is an isometric isomorphism from $X$ onto $(\Dcal(L_s),\|L_s\cdot\|_X)$ and $F_{\tau,s}$ is bounded on the latter by Proposition~\ref{prop:F-unbounded}(iii), the composition
\begin{equation}\label{eq:Ataus-def}
\mathcal A_{\tau,s}:=F_{\tau,s}L_s^{-1}:X\to Y_\tau,
\qquad
(\mathcal A_{\tau,s}\phi)(t)=\sum_{n\ge1}(1+\lambda_n)^{-s/2}\,\mathbf c_n\phi_n\,(e^{-\lambda_nt}-1),
\qquad 0<t<\tau,
\end{equation}
is a bounded operator defined on all of $X$. Since $L_s^{-1}V_n=V_n$, this change of variable preserves the spectral
approximation spaces.

This construction is the standard one of regularization in Hilbert scales; see
\cite[Section~8.4]{EnglHankeNeubauer1996} and all the references therein. There, the
penalty $\|L_sp\|_X$ is used for a bounded forward operator on $X$, and the
substitution $\phi=L_sp$ converts the Hilbert-scale problem into a standard
Tikhonov problem for $FL_s^{-1}$. In the present setting the same substitution
plays a second role. By Proposition~\ref{prop:F-unbounded}, $F_\tau$ has no
closed extension in $X\times Y_\tau$, so no choice of domain containing $\mathcal V_{\rm fin}$,
with the topology inherited from $X$, makes $F_\tau$ closable (Remark~\ref{rem:Ftau-domain}). We
therefore do not extend $F_\tau$; we change the norm on its domain, which
is exactly the passage from $F_\tau$ to $F_{\tau,s}$, and
$\mathcal A_{\tau,s}=F_{\tau,s}L_s^{-1}$ is the representation of
$F_{\tau,s}$ on $X$ through the isometry $L_s$. It acts on
the same sources as $F_\tau$ and produces the same fluxes, as
\eqref{eq:compactified-domain-identity} below makes precise; only the
norm in which the source is measured has changed. Compactness of
$\mathcal A_{\tau,s}$ then follows from the decay of $(1+\lambda_n)^{-s/2}$,
and the Tikhonov problem~\eqref{eq:intro-tikhonov-phi} for $\mathcal A_{\tau,s}$ with penalty
$\|\phi\|_X$ coincides with the Tikhonov problem~\eqref{eq:intro-tikhonov-Ls} for $F_\tau$ with the
Hilbert-scale penalty $\|L_sp\|_X$. This holds also after restriction to a Galerkin
space $V_n$, because $L_s^{-1}V_n=V_n$.

\begin{proposition}\label{prop:Ataus-HS}
For every $\tau>0$, the operator $\mathcal A_{\tau,s}:X\to Y_\tau$ is
Hilbert--Schmidt.
\end{proposition}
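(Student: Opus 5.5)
The plan is to verify the Hilbert--Schmidt criterion of Section~\ref{subsec:operators} directly on the orthonormal basis $\{\varphi_n\}_{n\ge1}$ of $X$, exactly as in the proof of Proposition~\ref{prop:history-compact}. Boundedness of $\mathcal A_{\tau,s}$ on $X$ is already available: it is the composition of the isometry $L_s^{-1}:X\to(\Dcal(L_s),\|L_s\cdot\|_X)$ with the bounded operator $F_{\tau,s}$ of Proposition~\ref{prop:F-unbounded}(iii). Hence only the summability of $\sum_n\|\mathcal A_{\tau,s}\varphi_n\|_{Y_\tau}^2$ must be shown.

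For a single basis vector, the series \eqref{eq:Ataus-def} reduces to one term:
\[
(\mathcal A_{\tau,s}\varphi_n)(t)=(1+\lambda_n)^{-s/2}\mathbf c_n(e^{-\lambda_nt}-1),\qquad 0<t<\tau.
\]
Since $|e^{-\lambda_nt}-1|\le1$ for $t\ge0$, we get $\int_0^\tau(e^{-\lambda_nt}-1)^2\dd t\le\tau$. Combined with \eqref{eq:bn}, this gives
\[
\|\mathcal A_{\tau,s}\varphi_n\|_{Y_\tau}^2\le\tau\,C_M\,(1+\lambda_n)^{-s}\lambda_n^{-1}\le\tau\,C_M\,\lambda_n^{-(1+s)}.
\]
Because $s>0$, the exponent $\kappa=1+s$ exceeds $1$, so \eqref{eq:weyl-summability} makes the right-hand side summable. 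This yields $\|\mathcal A_{\tau,s}\|_{\rm HS}^2\le\tau C_M\sum_n(1+\lambda_n)^{-s}\lambda_n^{-1}<\infty$, which is precisely $C_{\tau,s,M}^2$ from Proposition~\ref{prop:F-unbounded}(iii). The criterion \cite[Lemma~6.11]{Teschl2014} then concludes the proof.

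There is no serious obstacle. The only point requiring care is that the summability genuinely uses $s>0$: with $s=0$ the sum $\sum_n\lambda_n^{-1}$ diverges logarithmically, consistent with Lemma~\ref{lem:cn-divergence} and the unboundedness of $F_\tau$. The computation should therefore make explicit that the weight $(1+\lambda_n)^{-s}$ supplies exactly the extra decay needed beyond the $\lambda_n^{-1}$ bound in \eqref{eq:bn}.
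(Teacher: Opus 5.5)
Your proposal is correct and follows the same route as the paper: you take boundedness from \eqref{eq:Ataus-def} together with Proposition~\ref{prop:F-unbounded}(iii), bound $\|\mathcal A_{\tau,s}\varphi_n\|_{Y_\tau}^2\le C_M\tau(1+\lambda_n)^{-s}\lambda_n^{-1}$ using \eqref{eq:bn}, and conclude by \eqref{eq:weyl-summability} and the Hilbert--Schmidt criterion. Your observation that the resulting Hilbert--Schmidt bound equals $C_{\tau,s,M}^2$, and that the argument genuinely needs $s>0$, is accurate.
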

\begin{proof}
The operator is bounded by \eqref{eq:Ataus-def}, and its action on a basis vector satisfies
\[
\|\mathcal A_{\tau,s}\varphi_n\|_{Y_\tau}^2
=(1+\lambda_n)^{-s}\|\mathbf c_n\|_{\R^M}^2
\int_0^\tau(1-e^{-\lambda_nt})^2\dd t
\le C_M\tau(1+\lambda_n)^{-s}\lambda_n^{-1}.
\]
The upper bounds are summable by \eqref{eq:weyl-summability}, and the Hilbert--Schmidt criterion
\cite[Lemma~6.11]{Teschl2014} gives the claim.
\end{proof}

Directly from \eqref{eq:Ataus-def},
\begin{equation}\label{eq:compactified-domain-identity}
\mathcal A_{\tau,s}L_sp=F_{\tau,s}p=F_\tau p,
\qquad p\in\Dcal(L_s).
\end{equation}
The equation for each characteristic source is therefore
equivalent to
$\mathcal A_{\tau_k,s}\phi_k^\dagger=\widetilde v_k$, with
$\phi_k^\dagger=L_sp_k^\dagger\in X$.

\begin{proposition}\label{prop:injectivity}
If $\Gamma_M$ is admissible, then $\mathcal A_{\tau,s}:X\to Y_\tau$ is
injective for every $\tau>0$.
\end{proposition}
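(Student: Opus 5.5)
The plan is to reduce injectivity of $\mathcal A_{\tau,s}$ to the vanishing of a Dirichlet series in $t$, and then to the vanishing of spectral coefficients eigenspace by eigenspace, using admissibility of $\Gamma_M$ only at the last step.

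\textbf{Step 1: reduction to a Dirichlet series.} Let $\phi\in X$ with $\mathcal A_{\tau,s}\phi=0$, and set $f:=L_s^{-1}\phi\in\Dcal(L_s)$. By \eqref{eq:compactified-domain-identity} we have $F_\tau f=0$ in $Y_\tau$. By \eqref{eq:source-series-summability} the series \eqref{eq:Ftau-def} converges absolutely and uniformly on $[0,\tau]$, so $F_\tau f$ is continuous there and the identity holds pointwise on $[0,\tau]$.

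Let $\mu_1<\mu_2<\cdots$ be the distinct eigenvalues, and group the modes by eigenvalue:
\[
a_\ell:=\sum_{n:\,\lambda_n=\mu_\ell}\mathbf c_nf_n\in\R^M,
\qquad \sum_\ell\|a_\ell\|_{\R^M}<\infty .
\]
The summability of the $a_\ell$ again comes from \eqref{eq:source-series-summability}. With $b:=\sum_\ell a_\ell$, the identity $F_\tau f=0$ becomes
\[
G(t):=\sum_\ell a_\ell e^{-\mu_\ell t}=b,\qquad t\in[0,\tau].
\]

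\textbf{Step 2: all grouped coefficients vanish.} Because $\sum_\ell\|a_\ell\|<\infty$, the series $G$ converges absolutely and locally uniformly on $\{\operatorname{Re}t>0\}$ and defines a holomorphic function there. Since $G\equiv b$ on $(0,\tau)$, the identity theorem gives $G\equiv b$ on the whole right half-plane. In particular $G(t)=b$ for real $t\to\infty$.

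Dominated convergence gives $G(t)\to0$ as $t\to\infty$, so $b=0$.

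Next I would peel off the coefficients one at a time. Suppose $a_1=\dots=a_{\ell-1}=0$. Then
\[
e^{\mu_\ell t}G(t)=a_\ell+\sum_{\ell'>\ell}a_{\ell'}e^{-(\mu_{\ell'}-\mu_\ell)t}.
\]
For $t\ge1$, each term of the sum is bounded by $\|a_{\ell'}\|e^{-(\mu_{\ell+1}-\mu_\ell)(t-1)}$, which tends to $0$. Since the left side is $0$, this forces $a_\ell=0$. By induction, $a_\ell=0$ for every $\ell$.

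This uniqueness step for the Dirichlet series is the only analytic point. The data are known only on the finite window $(0,\tau)$, and the analytic continuation in $t$ is exactly what overcomes that restriction. I expect it to be the main (though mild) obstacle.

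\textbf{Step 3: each eigenspace, using admissibility.} By the Bessel zero result quoted in Section~\ref{subsec:laplacian}, each eigenvalue corresponds to a single angular order $m$.

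\emph{Radial case ($m=0$).} The eigenspace is one-dimensional. By \eqref{czn_2d}, every entry of $\mathbf c_n$ is nonzero, so $a_\ell=0$ gives $f_n=0$.

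\emph{Nonradial case ($m\ge1$).} The eigenspace is spanned by the cosine mode and the sine mode, with coefficients $f^{\rm c}$ and $f^{\rm s}$. By \eqref{czn_2d}, $a_\ell=0$ means
\[
f^{\rm c}\cos(m\theta_j)+f^{\rm s}\sin(m\theta_j)=0,\qquad j=1,\dots,M.
\]
Take the admissible pair $a,b$. The determinant of the $2\times2$ system formed by the rows for $j=a$ and $j=b$ is
\[
\sin\bigl(m(\theta_b-\theta_a)\bigr),
\]
which is nonzero because $m(\theta_a-\theta_b)\notin\pi\Q$. Hence $f^{\rm c}=f^{\rm s}=0$.

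\textbf{Conclusion.} All spectral coefficients of $f$ vanish, so $f=0$ and therefore $\phi=L_sf=0$. This proves that $\mathcal A_{\tau,s}$ is injective for every $\tau>0$.
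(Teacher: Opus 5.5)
Your proof is correct and follows essentially the paper's route: grouping modes by distinct eigenvalue, analytically continuing the resulting Dirichlet series to the right half-plane, peeling off the coefficients one at a time via $t\to\infty$, and separating angular modes with the $2\times2$ minor $\sin(m(\theta_b-\theta_a))\neq0$. The only difference is that the paper removes the constant term by differentiating in $t$ before the analytic continuation, whereas you keep $G\equiv b$ and show $b=0$ by the limit $t\to\infty$, which needs only $\sum_\ell\|a_\ell\|_{\R^M}<\infty$ rather than summability of $\mu_\ell\|a_\ell\|_{\R^M}e^{-\mu_\ell t_0}$.
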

\begin{proof}
We use the Dirichlet-series argument and separation of angular modes in
\cite[Lemmas~3.4 and~3.5]{RundellZhang2020}.
Let $\mu_1<\mu_2<\cdots$ be the distinct eigenvalues, let
$E_p=\ker(\mathcal L-\mu_pI)$, and let $\Pi_p$ be the orthogonal projector
onto $E_p$. Define
\[
B_pv:=\sum_{\lambda_n=\mu_p}\mathbf c_n\ip{v}{\varphi_n}_X,
\qquad v\in E_p.
\]
For $\phi\in X$, grouping by eigenvalue gives
\[
(\mathcal A_{\tau,s}\phi)(t)=\sum_{p\ge1}a_p(e^{-\mu_pt}-1),
\qquad a_p=(1+\mu_p)^{-s/2}B_p\Pi_p\phi.
\]
Because every eigenspace has dimension at most two,
\[
\|a_p\|_{\R^M}\le\sqrt{2C_M}(1+\mu_p)^{-s/2}\mu_p^{-1/2}\|\Pi_p\phi\|_X.
\]
Cauchy--Schwarz and Weyl's law yield $\sum_p\|a_p\|_{\R^M}<\infty$.
Thus the series is uniformly convergent on $[0,\tau]$, and its derivative
$-\sum_p\mu_pa_pe^{-\mu_pt}$ converges locally uniformly in the complex
half-plane $\operatorname{Re}t>0$.

If $\mathcal A_{\tau,s}\phi=0$ in $Y_\tau$, continuity and differentiation
give $\sum_p\mu_pa_pe^{-\mu_pt}=0$ on $(0,\tau)$.
Analytic continuation gives the same identity for all $t>0$.
Multiplication by $e^{\mu_1t}$ and passage to the limit as $t\to\infty$ give $a_1=0$;
dominated convergence applies since $\sum_p\mu_p\|a_p\|_{\R^M}e^{-\mu_pt_0}<\infty$
for every $t_0>0$. Repetition shows $a_p=0$ for every $p$.

For a radial eigenspace, $B_p$ is nonzero and hence injective. For a
nonradial eigenspace of order $m\ge1$, its matrix is a nonzero scalar
multiple of
\[
\mathcal M_m=
\begin{pmatrix}
\cos(m\theta_1)&\sin(m\theta_1)\\
\vdots&\vdots\\
\cos(m\theta_M)&\sin(m\theta_M)
\end{pmatrix}.
\]
The admissible pair of sensor angles gives a minor with determinant
$\sin(m(\theta_b-\theta_a))\ne0$. Hence $B_p$ is injective on every
$E_p$. It follows that $\Pi_p\phi=0$ for all $p$, and therefore $\phi=0$.
\end{proof}

\subsection{Uniqueness}\label{subsec:uniqueness}

The preceding injectivity result implies uniqueness for the dynamic source
on every initial sequence of intervals. Related two-point uniqueness results
are established in \cite{RundellZhang2020,LiZhang:2020,LinOuZhangZhang2024}.

\begin{proposition}\label{proposition_uniqueness}
Assume that $\Gamma_M$ is admissible. Let $D_j$ and $\widetilde D_j$,
$j=1,\ldots,K$, be Lipschitz regions (Definition~\ref{def:regions}) on the
same time mesh, with sources $p_j^\dagger=\chi_{D_j}$ and
$\widetilde p_j^\dagger=\chi_{\widetilde D_j}$. If their boundary
fluxes agree on $\Gamma_M\times(0,t_k)$, then
$p_j^\dagger=\widetilde p_j^\dagger$ in $X$ and $D_j=\widetilde D_j$ for $j=1,\ldots,k$.
In particular, the data \eqref{data} uniquely determine all source functions
and their regions.
\end{proposition}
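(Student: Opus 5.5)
The plan is induction on $k$ through the block lower triangular structure of Lemma~\ref{lem:history-decomposition}, with Proposition~\ref{prop:injectivity} supplying injectivity on each diagonal block. Write $g$ and $\widetilde g$ for the two flux vectors, and set $v_j(t):=g(t_{j-1}+t)$, $\widetilde v_j(t):=\widetilde g(t_{j-1}+t)$ for $0<t<\tau_j$. By hypothesis $v_j=\widetilde v_j$ in $Y_{\tau_j}$ for every $j\le k$: the window $(0,t_k)$ is the union of the intervals $I_1,\ldots,I_k$ up to the finitely many points $t_j$, which have measure zero in $Y$. By Lemma~\ref{lem:Lsdomain}, all the sources $p_j^\dagger$ and $\widetilde p_j^\dagger$ lie in $\Dcal(L_s)$, so \eqref{eq:history-decomposition} holds for both families.

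The induction runs as follows. Suppose $p_i^\dagger=\widetilde p_i^\dagger$ for all $i<j$; for $j=1$ this hypothesis is empty. Subtracting the two instances of \eqref{eq:history-decomposition} on the $j$-th interval makes the history terms $\sum_{i<j}H_{j,i}(p_i^\dagger-\widetilde p_i^\dagger)$ vanish, and we are left with $F_{\tau_j}(p_j^\dagger-\widetilde p_j^\dagger)=0$. Put $f:=p_j^\dagger-\widetilde p_j^\dagger\in\Dcal(L_s)$ and $\phi:=L_sf\in X$. By \eqref{eq:compactified-domain-identity}, $\mathcal A_{\tau_j,s}\phi=F_{\tau_j}f=0$. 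Proposition~\ref{prop:injectivity} then gives $\phi=0$, and since $L_s$ is injective on $\Dcal(L_s)$, $f=0$. So $p_j^\dagger=\widetilde p_j^\dagger$ in $X$, and the induction proceeds up to $j=k$.

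Equality in $X$ means $\chi_{D_j}=\chi_{\widetilde D_j}$ almost everywhere, that is $|D_j\triangle\widetilde D_j|=0$. Lemma~\ref{lem:regions-determined} then yields $D_j=\widetilde D_j$. Taking $k=K$ gives the final sentence of the statement.

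None of the steps is delicate. The only point needing care is that one must not invert $F_{\tau_j}$ directly, since it is non-closable. The argument avoids this by passing through $\mathcal A_{\tau_j,s}$, which is legitimate precisely because the difference $f$ of two characteristic functions lies in $\Dcal(L_s)$ for $s<\tfrac12$ (Lemma~\ref{lem:Lsdomain}). A second, minor check is that the history terms cancel exactly rather than only approximately; this holds because the $H_{j,i}$ are linear and defined on all of $X$.
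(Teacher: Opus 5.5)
Your proposal is correct and follows essentially the same route as the paper: you subtract the two block lower triangular representations and argue by forward substitution, using $\mathcal A_{\tau_j,s}L_s q_j=F_{\tau_j}q_j$ together with Proposition~\ref{prop:injectivity} (legitimate since $q_j\in\Dcal(L_s)$ by Lemma~\ref{lem:Lsdomain}), and then you conclude $D_j=\widetilde D_j$ via Lemma~\ref{lem:regions-determined}. Your additional remarks, that $(0,t_k)$ is the union of the intervals $I_j$ up to a null set and that you never invert the non-closable $F_{\tau_j}$ directly, are accurate details that the paper leaves implicit.
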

\begin{proof}
Set $q_j=p_j^\dagger-\widetilde p_j^\dagger$.
By Lemma~\ref{lem:Lsdomain}, $q_j\in\Dcal(L_s)$.
Subtracting the two representations \eqref{eq:block-triangular} yields the homogeneous block triangular system
\[
F_{\tau_j}q_j+\sum_{i<j}H_{j,i}q_i=0,\qquad j\le k,
\]
which we solve by forward substitution. For $j=1$, \eqref{eq:compactified-domain-identity} gives
$\mathcal A_{\tau_1,s}L_sq_1=0$, so Proposition~\ref{prop:injectivity}
implies $q_1=0$. If $q_i=0$ for $i<j$, the history terms vanish and the
same argument gives $q_j=0$. Induction proves that
$p_j^\dagger=\widetilde p_j^\dagger$ in $X$ for $j\le k$, and
Lemma~\ref{lem:regions-determined} gives $D_j=\widetilde D_j$.
\end{proof}

\subsection{Finite-dimensional Galerkin--Tikhonov regularization}\label{subsec:onestep}

Throughout this subsection and the next, assume that $\Gamma_M$ is admissible.
Fix an interval index $k$ and abbreviate $\tau:=\tau_k$; this is the problem for a single diagonal block of \eqref{eq:block-triangular}. Set
\begin{equation*}
p_k^\dagger:=\chi_{D_k},
\qquad
\phi_k^\dagger:=L_s p_k^\dagger\in X.
\end{equation*}
The exact shifted data are
\begin{equation*}
\widetilde v_k
:=v_k-\sum_{i=1}^{k-1}H_{k,i}p_i^\dagger
=F_\tau p_k^\dagger
=\mathcal A_{\tau,s}\phi_k^\dagger.
\end{equation*}
Given perturbed shifted data $\widetilde v_k^\delta\in Y_\tau$, suppose that
\[
\norm{\widetilde v_k^\delta-\widetilde v_k}_{Y_\tau}
\le\varepsilon(\delta).
\]
Here $\varepsilon(\delta)$ bounds the error in the shifted data; it may include errors from reconstructing sources on earlier intervals, as well as measurement noise. We define
\begin{equation}\label{finiteDTRPB}
\phi_{n,\alpha}^\delta
:=\operatorname*{argmin}_{\phi\in V_n}
\left\{
\frac12\norm{\mathcal A_{\tau,s}\phi-\widetilde v_k^\delta}_{Y_\tau}^2
+\frac\alpha2\norm{\phi}_X^2
\right\}.
\end{equation}
For every $\alpha>0$, this functional is strictly convex and coercive on $V_n$, so the minimizer exists and is unique. The reconstructed source is $L_s^{-1}\phi_{n,\alpha}^\delta$.

With $P_n$ the orthogonal projector onto $V_n$, the discretization error
\begin{equation*}
\gamma_n(\tau,s)
:=\norm{\mathcal A_{\tau,s}(I-P_n)}_{\mathcal L(X,Y_\tau)}
\end{equation*}
tends to zero, as follows immediately from the Hilbert--Schmidt property.
\begin{lemma}\label{lem:gamma-n}
For every $\tau>0$, one has $\gamma_n(\tau,s)\to0$ as $n\to\infty$.
\end{lemma}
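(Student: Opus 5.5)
The plan is to bound the operator norm by the Hilbert--Schmidt norm of the tail and then use summability. Write $\mathcal A:=\mathcal A_{\tau,s}$. Since $I-P_n$ is the orthogonal projector onto $\overline{\operatorname{span}}\{\varphi_m:m>n\}$, the operator $\mathcal A(I-P_n)$ is again Hilbert--Schmidt. Computing its Hilbert--Schmidt norm in the basis $\{\varphi_m\}_{m\ge1}$, every basis vector with $m\le n$ is annihilated and every $\varphi_m$ with $m>n$ is left unchanged. This gives
\[
\gamma_n(\tau,s)^2=\norm{\mathcal A(I-P_n)}^2_{\mathcal L(X,Y_\tau)}\le\norm{\mathcal A(I-P_n)}_{\rm HS}^2=\sum_{m>n}\norm{\mathcal A\varphi_m}_{Y_\tau}^2 .
\]
Here the inequality $\|A\|\le\|A\|_{\rm HS}$ is the one recalled in Section~\oldref{subsec:operators}.

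Next I will use the explicit bound from the proof of Proposition~\oldref{prop:Ataus-HS}, namely $\norm{\mathcal A\varphi_m}_{Y_\tau}^2\le C_M\tau(1+\lambda_m)^{-s}\lambda_m^{-1}$. The full series $\sum_m(1+\lambda_m)^{-s}\lambda_m^{-1}$ converges by \eqref{eq:weyl-summability} with $\kappa=1+s>1$, because $(1+\lambda_m)^{-s}\le\lambda_m^{-s}$. Hence the tail sum over $m>n$ tends to zero, which gives
\[
\gamma_n(\tau,s)\le\Bigl(C_M\tau\sum_{m>n}(1+\lambda_m)^{-s}\lambda_m^{-1}\Bigr)^{1/2}\longrightarrow0 .
\]
This estimate also yields an explicit rate, which may be useful later.

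There is no real obstacle here. The only point to check is that the Hilbert--Schmidt norm of $\mathcal A(I-P_n)$ may be computed in the eigenbasis, where it reduces to a tail sum. This holds by the basis-independence of the Hilbert--Schmidt norm.
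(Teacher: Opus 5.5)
Your proposal is correct and is essentially the paper's proof: both bound $\gamma_n(\tau,s)$ by the Hilbert--Schmidt norm of $\mathcal A_{\tau,s}(I-P_n)$ computed in the eigenbasis, which is the tail $\sum_{m>n}\|\mathcal A_{\tau,s}\varphi_m\|_{Y_\tau}^2$, and this tail vanishes by Proposition~\oldref{prop:Ataus-HS}. Inserting the explicit bound $C_M\tau(1+\lambda_m)^{-s}\lambda_m^{-1}$ to get a rate is a harmless addition, and your justification of its summability via $\kappa=1+s$ is correct.
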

\begin{proof}
By Proposition~\ref{prop:Ataus-HS},
\[
\gamma_n(\tau,s)^2
\le\sum_{m>n}\norm{\mathcal A_{\tau,s}\varphi_m}_{Y_\tau}^2
\longrightarrow0.
\]
\end{proof}

The following convergence result is a standard Galerkin--Tikhonov argument; see also \cite[Theorem~4.2.8]{groetsch1984theory}. We give a short proof for the parameter rule used below.
\begin{lemma}\label{lem:single-slab-convergence}
Let $\phi_{n,\alpha}^\delta$ solve \eqref{finiteDTRPB}. Suppose that, as $\delta\to0$,
\begin{equation}\label{eq:parameter-rule}
n(\delta)\to\infty,
\qquad
\alpha(\delta)\to0,
\qquad
\frac{\varepsilon(\delta)+\gamma_{n(\delta)}(\tau,s)}
{\sqrt{\alpha(\delta)}}\to0.
\end{equation}
Then
\begin{equation*}
\norm{\phi_{n(\delta),\alpha(\delta)}^\delta-\phi_k^\dagger}_X\to0,
\end{equation*}
and consequently
\begin{equation}\label{eq:f-convergence}
\norm{L_s^{-1}\phi_{n(\delta),\alpha(\delta)}^\delta-p_k^\dagger}_X\to0.
\end{equation}
\end{lemma}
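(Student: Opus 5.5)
The plan is to compare the Galerkin--Tikhonov minimizer with the projection $P_n\phi_k^\dagger$ and to exploit compactness of $\mathcal A_{\tau,s}$ through a weak-convergence argument. Abbreviate $A:=\mathcal A_{\tau,s}$, $n=n(\delta)$, $\alpha=\alpha(\delta)$, $\phi^\delta:=\phi_{n,\alpha}^\delta$, and $\phi^\dagger:=\phi_k^\dagger$.

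\textbf{Step 1 (energy bound).} We test the minimality in \eqref{finiteDTRPB} against the competitor $P_n\phi^\dagger\in V_n$. Since $AP_n\phi^\dagger-\widetilde v_k^\delta=A(P_n-I)\phi^\dagger+(\widetilde v_k-\widetilde v_k^\delta)$, we get
\[
\tfrac12\|A\phi^\delta-\widetilde v_k^\delta\|_{Y_\tau}^2+\tfrac\alpha2\|\phi^\delta\|_X^2
\le\tfrac12\bigl(\gamma_n\|\phi^\dagger\|_X+\varepsilon\bigr)^2+\tfrac\alpha2\|P_n\phi^\dagger\|_X^2 .
\]
Dividing by $\alpha/2$ and using \eqref{eq:parameter-rule} together with $\|P_n\phi^\dagger\|_X\le\|\phi^\dagger\|_X$ gives
\[
\limsup_{\delta\to0}\|\phi^\delta\|_X\le\|\phi^\dagger\|_X .
\]
The same inequality also yields $\|A\phi^\delta-\widetilde v_k^\delta\|_{Y_\tau}\to0$, because its right-hand side tends to zero, and hence $A\phi^\delta\to\widetilde v_k=A\phi^\dagger$ in $Y_\tau$.

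\textbf{Step 2 (identifying the limit).} By Step 1 the family $(\phi^\delta)$ is bounded in $X$. Along any sequence $\delta_j\to0$ we extract a weakly convergent subsequence $\phi^{\delta_j}\rightharpoonup\psi$. Since $A$ is bounded (indeed compact, Proposition~\ref{prop:Ataus-HS}), $A\phi^{\delta_j}\rightharpoonup A\psi$. The strong limit from Step 1 then forces $A\psi=A\phi^\dagger$, so $\psi=\phi^\dagger$ by the injectivity of Proposition~\ref{prop:injectivity}. Weak lower semicontinuity combined with the limsup bound of Step 1 gives $\|\phi^{\delta_j}\|_X\to\|\phi^\dagger\|_X$. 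In a Hilbert space, weak convergence together with convergence of norms yields strong convergence. Because every sequence has a subsequence with the same limit $\phi^\dagger$, the whole family converges: $\|\phi^\delta-\phi^\dagger\|_X\to0$.

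\textbf{Step 3 (back to the sources).} Since $L_s^{-1}$ is bounded on $X$ with norm $(1+\lambda_1)^{-s/2}$ and $L_s^{-1}\phi^\dagger=p_k^\dagger$, we obtain
\[
\|L_s^{-1}\phi^\delta-p_k^\dagger\|_X\le(1+\lambda_1)^{-s/2}\|\phi^\delta-\phi^\dagger\|_X\to0,
\]
which is \eqref{eq:f-convergence}.

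The main care point is Step 1: one must use $P_n\phi^\dagger$ rather than $\phi^\dagger$ as the competitor, since $\phi^\dagger\notin V_n$ in general. This substitution is exactly where $\gamma_n$ enters, and it is why $\gamma_n$ must be $o(\sqrt\alpha)$. Everything else is the standard weak-compactness argument; injectivity is essential, since without it the limit would only be identified up to $\ker A$.
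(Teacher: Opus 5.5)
Your proposal is correct and follows the paper's proof essentially step for step: compare with the competitor $P_n\phi_k^\dagger$, deduce bounded minimizers, vanishing residuals and the $\limsup$ norm bound, identify weak subsequential limits via injectivity, and upgrade to strong convergence through weak lower semicontinuity. The only difference is cosmetic: you identify the weak limit using mere boundedness of $\mathcal A_{\tau,s}$ (weak-to-weak continuity), whereas the paper invokes compactness, which is not actually needed at that step.
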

\begin{proof}
Write $A:=\mathcal A_{\tau,s}$ and suppress the parameter dependence on $\delta$. Comparing the minimizer with $P_n\phi_k^\dagger$ gives
\[
\norm{A\phi_{n,\alpha}^\delta-\widetilde v_k^\delta}_{Y_\tau}^2
+\alpha\norm{\phi_{n,\alpha}^\delta}_X^2
\le
\bigl(\varepsilon+\gamma_n(\tau,s)\norm{\phi_k^\dagger}_X\bigr)^2
+\alpha\norm{P_n\phi_k^\dagger}_X^2.
\]
The parameter rule implies that the minimizers are bounded, their residuals tend to zero, and
\[
\limsup_{\delta\to0}\norm{\phi_{n,\alpha}^\delta}_X
\le\norm{\phi_k^\dagger}_X.
\]
Given any sequence $\delta_j\to0$, boundedness yields a subsequence,
not relabeled, such that
$\phi_{n,\alpha}^{\delta_j}\rightharpoonup\phi$ in $X$.
Since $\mathcal A_{\tau,s}$ is Hilbert--Schmidt and hence compact,
$\mathcal A_{\tau,s}\phi_{n,\alpha}^{\delta_j}\to\mathcal A_{\tau,s}\phi$
strongly in $Y_\tau$; together with the vanishing residuals and
$\widetilde v_k^{\delta_j}\to\widetilde v_k$, every weak limit therefore solves $A\phi=\widetilde v_k$.
By Proposition~\ref{prop:injectivity}, it equals $\phi_k^\dagger$.
Since the initial sequence was arbitrary, the whole family converges
weakly to $\phi_k^\dagger$. Weak lower semicontinuity and the preceding
limsup bound give
$\norm{\phi_{n,\alpha}^\delta}_X\to\norm{\phi_k^\dagger}_X$.
Together with weak convergence, this proves strong convergence in $X$.
Finally, $L_s^{-1}$ is bounded on $X$, which proves \eqref{eq:f-convergence}.
\end{proof}

\subsection{The main convergence theorem}\label{subsec:mainthm}

We now reconstruct the dynamic source in
\begin{equation*}
\mathcal F(p_1^\dagger,\ldots,p_K^\dagger)=g.
\end{equation*}
By Lemma~\ref{lem:history-decomposition}, the exact measurements satisfy the block triangular system \eqref{eq:block-triangular}, that is,
\begin{equation}\label{eq:exact-step-k}
v_k=\sum_{i=1}^{k-1}H_{k,i}p_i^\dagger+F_{\tau_k}p_k^\dagger,
\end{equation}
and the measured data satisfy
\begin{equation}\label{eq:measurement-noise}
\norm{v_k^\delta-v_k}_{Y_{\tau_k}}\le\delta,
\qquad k=1,\ldots,K.
\end{equation}
\begin{definition}[Sequential Galerkin--Tikhonov reconstruction]\label{def:sequential-reconstruction}
Set $\widetilde v_1^\delta:=v_1^\delta$. After reconstructing $\widehat p_1^\delta,\ldots,\widehat p_{k-1}^\delta$, define
\begin{equation}\label{eq:shifted-data-delta}
\widetilde v_k^\delta
:=v_k^\delta-\sum_{i=1}^{k-1}H_{k,i}\widehat p_i^\delta, \quad k\geq 2.
\end{equation}
For each interval, compute
\begin{equation}\label{eq:kth-tikhonov}
\begin{aligned}
\widehat\phi_k^\delta
&:=\operatorname*{argmin}_{\phi\in V_{n_k(\delta)}}
\left\{
\frac12\norm{\mathcal A_{\tau_k,s}\phi-\widetilde v_k^\delta}_{Y_{\tau_k}}^2
+\frac{\alpha_k(\delta)}2\norm{\phi}_X^2
\right\},\\
\widehat p_k^\delta&:=L_s^{-1}\widehat\phi_k^\delta.
\end{aligned}
\end{equation}
\end{definition}
This is forward substitution in the triangular system \eqref{eq:block-triangular}: the exact earlier sources are replaced by their reconstructions, and each diagonal block $F_{\tau_k}$ is inverted through the regularized compactified problem \eqref{eq:kth-tikhonov}. Definition~\ref{def:sequential-reconstruction} is the reconstruction method introduced in this paper. Algorithm~\ref{alg:sequential} summarizes it, together with the final thresholding step that recovers the regions; see \eqref{eq:region-convergence} below. On $V_{n_k}$, the minimization in \eqref{eq:kth-tikhonov} is a linear least-squares problem of dimension $n_k$, which the experiments of Section~\ref{section_num} solve through a singular value decomposition.

\begin{algorithm}[htbp]
\caption{Sequential Galerkin--Tikhonov reconstruction}\label{alg:sequential}
\begin{algorithmic}[1]
\Require noisy data $v_k^\delta\in Y_{\tau_k}$, $k=1,\ldots,K$; smoothing order $s\in(0,\tfrac12)$; regularization parameters $\alpha_k>0$ and Galerkin dimensions $n_k\in\mathbb N$, $k=1,\ldots,K$
\Ensure reconstructed sources $\widehat p_1^\delta,\ldots,\widehat p_K^\delta$ and regions $\widehat D_1^\delta,\ldots,\widehat D_K^\delta$
\For{$k=1,\ldots,K$}
  \State \textit{History subtraction:} $\widetilde v_k^\delta\gets v_k^\delta-\sum_{i=1}^{k-1}H_{k,i}\widehat p_i^\delta$ \Comment{empty sum for $k=1$}
  \State \textit{Regularized step:} $\widehat\phi_k^\delta\gets\operatorname*{argmin}_{\phi\in V_{n_k}}\bigl\{\tfrac12\|\mathcal A_{\tau_k,s}\phi-\widetilde v_k^\delta\|_{Y_{\tau_k}}^2+\tfrac{\alpha_k}2\|\phi\|_X^2\bigr\}$
  \State \textit{Change of variables:} $\widehat p_k^\delta\gets L_s^{-1}\widehat\phi_k^\delta$
  \State \textit{Thresholding:} $\widehat D_k^\delta\gets\{x\in\Omega:\widehat p_k^\delta(x)\ge\tfrac12\}$
\EndFor
\end{algorithmic}
\end{algorithm}

The effective error in the shifted data is
\begin{equation}\label{eq:eta-k}
\eta_k^\delta
:=\norm{\widetilde v_k^\delta-F_{\tau_k}p_k^\dagger}_{Y_{\tau_k}}.
\end{equation}
\begin{theorem}\label{thm:dynamic-convergence}
Suppose that, for every $k=1,\ldots,K$, the parameters satisfy \eqref{eq:parameter-rule} with $\tau=\tau_k$, $n=n_k$, $\alpha=\alpha_k$, and $\varepsilon(\delta)=\eta_k^\delta$. Then
\begin{equation*}
\norm{\widehat p_k^\delta-p_k^\dagger}_X\to0
\qquad\text{as }\delta\to0,
\qquad k=1,\ldots,K.
\end{equation*}
\end{theorem}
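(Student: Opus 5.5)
The plan is to apply Lemma~\ref{lem:single-slab-convergence} once per interval, with $\widetilde v_k^\delta$ from \eqref{eq:shifted-data-delta} as the perturbed shifted data. The hypothesis of that lemma is a bound $\|\widetilde v_k^\delta-\widetilde v_k\|_{Y_{\tau_k}}\le\varepsilon(\delta)$ with $\widetilde v_k=F_{\tau_k}p_k^\dagger=\mathcal A_{\tau_k,s}\phi_k^\dagger$. By the definition \eqref{eq:eta-k}, this bound holds with equality when $\varepsilon(\delta)=\eta_k^\delta$. The assumed parameter rule \eqref{eq:parameter-rule} with $\varepsilon=\eta_k^\delta$ then gives $\|\widehat p_k^\delta-p_k^\dagger\|_X\to0$ through \eqref{eq:f-convergence}, because $\widehat p_k^\delta=L_s^{-1}\widehat\phi_k^\delta$ is exactly the reconstruction of \eqref{finiteDTRPB} with $n=n_k(\delta)$ and $\alpha=\alpha_k(\delta)$.

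Formally this settles every $k$ at once. The substantive issue is that the rule \eqref{eq:parameter-rule} requires $\varepsilon(\delta)\to0$ implicitly: it asks for $(\eta_k^\delta+\gamma_{n_k})/\sqrt{\alpha_k}\to0$ with $\alpha_k\to0$. Lemma~\ref{lem:single-slab-convergence} also uses $\widetilde v_k^{\delta}\to\widetilde v_k$ in its proof. So I will make explicit that $\eta_k^\delta\to0$, by induction on $k$, and check that the hypotheses are consistent.

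For $k=1$, $\eta_1^\delta=\|v_1^\delta-v_1\|\le\delta$ by \eqref{eq:measurement-noise} and \eqref{eq:history-decomposition}. For $k\ge2$, I subtract \eqref{eq:exact-step-k} from \eqref{eq:shifted-data-delta}:
\[
\widetilde v_k^\delta-F_{\tau_k}p_k^\dagger=(v_k^\delta-v_k)-\sum_{i<k}H_{k,i}\bigl(\widehat p_i^\delta-p_i^\dagger\bigr),
\]
which yields
\[
\eta_k^\delta\le\delta+\sum_{i<k}\|H_{k,i}\|_{\mathcal L(X,Y_{\tau_k})}\,\|\widehat p_i^\delta-p_i^\dagger\|_X .
\]
Here $H_{k,i}$ is bounded on all of $X$ by Lemma~\ref{lem:damped-series} and Proposition~\ref{prop:history-compact}. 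This step matters: the reconstructions $\widehat p_i^\delta$ lie in $V_{n_i}$, but the errors $\widehat p_i^\delta-p_i^\dagger$ are controlled only in $X$. By the induction hypothesis the right-hand side tends to zero, so $\eta_k^\delta\to0$, and Lemma~\ref{lem:single-slab-convergence} applies on the $k$-th interval. This closes the induction.

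The main obstacle I expect is not analytic but logical. The parameter choice on interval $k$ depends on $\eta_k^\delta$, which depends on the earlier reconstructions and hence on the earlier parameter choices. I will state that the rules are chosen sequentially, for instance by taking $\alpha_k$ as any function with $\eta_k^\delta/\sqrt{\alpha_k}\to0$, which is possible once $\eta_k^\delta\to0$ has been established inductively. This shows the theorem's hypothesis is not vacuous. Where the convergence is used, it should go along arbitrary sequences $\delta_j\to0$, as in the proof of Lemma~\ref{lem:single-slab-convergence}.
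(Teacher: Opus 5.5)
Your proposal is correct and follows the paper's proof. Both argue by induction on $k$: $\eta_1^\delta\le\delta$, and for $k\ge2$ the bound $\eta_k^\delta\le\delta+\sum_{i<k}\|H_{k,i}\|_{\mathcal L(X,Y_{\tau_k})}\|\widehat p_i^\delta-p_i^\dagger\|_X$ holds because the history operators are bounded, after which Lemma~\ref{lem:single-slab-convergence} is applied on each interval. Your side observation is also right: since $\alpha_k\to0$ and $\eta_k^\delta/\sqrt{\alpha_k}\to0$, the hypothesis already forces $\eta_k^\delta\to0$, so the induction mainly shows that the hypothesis can be met by successive parameter choices, as in Remark~\ref{rem:parameter-choice}.
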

\begin{proof}
For the first interval, $\eta_1^\delta\le\delta$, so Lemma~\ref{lem:single-slab-convergence} gives convergence. For $k\ge2$, subtracting \eqref{eq:exact-step-k} from \eqref{eq:shifted-data-delta} yields
\begin{equation}\label{eq:effective-noise-basic}
\eta_k^\delta
\le\delta+
\sum_{i=1}^{k-1}\norm{H_{k,i}}_{\mathcal L(X,Y_{\tau_k})}
\norm{\widehat p_i^\delta-p_i^\dagger}_X.
\end{equation}
The history operators are bounded by Proposition~\ref{prop:history-compact}. Hence convergence on intervals $1,\ldots,k-1$ implies $\eta_k^\delta\to0$, and Lemma~\ref{lem:single-slab-convergence} gives convergence on interval $k$. Induction completes the proof.
\end{proof}

\begin{remark}\label{rem:parameter-choice}
The parameter sequences can be chosen successively: after fixing the earlier reconstructions, choose $\alpha_k(\delta)\to0$ slowly enough that $\eta_k^\delta/\sqrt{\alpha_k(\delta)}\to0$, and then choose $n_k(\delta)\to\infty$ so that $\gamma_{n_k(\delta)}(\tau_k,s)/\sqrt{\alpha_k(\delta)}\to0$. 
\end{remark}

The convergence also implies that the unknown regions of Definition~\ref{def:regions} are recovered in measure. Indeed, define the thresholded regions of Algorithm~\ref{alg:sequential},
\[
\widehat D_k^\delta
:=\{x\in\Omega:\widehat p_k^\delta(x)\ge\tfrac12\}.
\]
On $\widehat D_k^\delta\mathbin{\triangle}D_k$, the pointwise error has magnitude at least $1/2$. Therefore
\begin{equation}\label{eq:region-convergence}
\abs{\widehat D_k^\delta\mathbin{\triangle}D_k}
\le4\norm{\widehat p_k^\delta-\chi_{D_k}}_X^2
\longrightarrow0.
\end{equation}
Thus Algorithm~\ref{alg:sequential} recovers the regions $D_k$, which by Lemma~\ref{lem:regions-determined} are the actual unknowns of \eqref{inverse_problem}, up to sets whose measure vanishes with the noise.

\section{Error propagation and sensor stability}\label{section_stability}
In this section we study some quantitative properties of Algorithm~\ref{alg:sequential}.  We first check how errors in earlier reconstructions enter later shifted data, with exponential decay in the temporal variable. We then study how adding sensors affects  stability on fixed Galerkin spaces.

\subsection{Propagation of reconstruction errors}\label{section_quantitative}

The history operators, that is, the off-diagonal blocks of \eqref{eq:block-triangular}, quantify how errors from earlier reconstructions enter later shifted data during forward substitution.
\begin{proposition}\label{prop:history-decay-quantitative}
There exists $C_H>0$, depending only on the domain and the fixed sensor set, such that
\begin{equation}\label{eq:history-decay-quantitative}
\norm{H_{k,i}}_{\mathcal L(X,Y_{\tau_k})}
\le C_H e^{-\lambda_1(t_{k-1}-t_i)},
\qquad 1\le i<k\le K.
\end{equation}
Consequently, writing $E_i^\delta:=\norm{\widehat p_i^\delta-p_i^\dagger}_X$, one has
\begin{equation}\label{eq:effective-noise-propagation}
\eta_k^\delta
\le\delta+C_H\sum_{i=1}^{k-1}
e^{-\lambda_1(t_{k-1}-t_i)}E_i^\delta.
\end{equation}
\end{proposition}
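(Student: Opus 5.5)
The plan is to bound the operator norm of $H_{k,i}$ directly from the series \eqref{eq:Hki-def}, using Lemma~\ref{lem:damped-series} with $\sigma:=t_{k-1}-t_i\ge0$. We factor out the slowest exponential. Since $\lambda_n\ge\lambda_1$ and $\sigma\ge0$, we have $e^{-\lambda_n\sigma}\le e^{-\lambda_1\sigma}$ for every $n$. We also have $0<1-e^{-\lambda_n\tau_i}<1$. For $f\in X$, the triangle inequality in $Y_{\tau_k}$ and the same computation as in the proof of Lemma~\ref{lem:damped-series} then give
\[
\norm{H_{k,i}f}_{Y_{\tau_k}}
\le\sum_{n\ge1}\norm{\mathbf c_n}_{\R^M}|f_n|\,e^{-\lambda_n\sigma}\Bigl(\int_0^{\tau_k}e^{-2\lambda_nt}\dd t\Bigr)^{1/2}
\le e^{-\lambda_1\sigma}\sum_{n\ge1}\Bigl(\frac{C_M}{2}\Bigr)^{1/2}\lambda_n^{-1}|f_n|.
\]
Here \eqref{eq:bn} and $\int_0^{\tau_k}e^{-2\lambda_nt}\dd t\le(2\lambda_n)^{-1}$ were used.

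Next we apply Cauchy--Schwarz in $n$ to the last sum. This gives
\[
\norm{H_{k,i}f}_{Y_{\tau_k}}\le e^{-\lambda_1\sigma}\Bigl(\frac{C_M}{2}\sum_{n\ge1}\lambda_n^{-2}\Bigr)^{1/2}\norm{f}_X .
\]
The series on the right is finite by \eqref{eq:weyl-summability} with $\kappa=2$. We set $C_H:=\bigl(\tfrac{C_M}{2}\sum_n\lambda_n^{-2}\bigr)^{1/2}$. This constant depends only on the eigenvalues of the disk and on $M$ through $C_M=2M/\pi$, hence only on the domain and the sensor set. In particular it is independent of $k$, $i$, the time mesh and the window lengths, which gives \eqref{eq:history-decay-quantitative}. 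The one point requiring care is to bound $e^{-\lambda_n\sigma}$ by $e^{-\lambda_1\sigma}$ \emph{before} summing. Summing first and bounding afterwards would give a constant depending on $\sigma$ and degenerating as $\sigma\to0$, and the extraction above avoids that. There is no real obstacle.

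For \eqref{eq:effective-noise-propagation}, we start from the decomposition already used in the proof of Theorem~\ref{thm:dynamic-convergence}. By \eqref{eq:shifted-data-delta} and \eqref{eq:exact-step-k},
\[
\widetilde v_k^\delta-F_{\tau_k}p_k^\dagger=(v_k^\delta-v_k)-\sum_{i<k}H_{k,i}(\widehat p_i^\delta-p_i^\dagger).
\]
The differences $\widehat p_i^\delta-p_i^\dagger$ lie in $X$, where $H_{k,i}$ is defined and bounded. Taking norms and using \eqref{eq:measurement-noise} yields \eqref{eq:effective-noise-basic}. Inserting the bound \eqref{eq:history-decay-quantitative} on $\norm{H_{k,i}}$ then gives \eqref{eq:effective-noise-propagation}.
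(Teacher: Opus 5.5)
Your proof is correct and is essentially the paper's argument. The paper bounds $\norm{H_{k,i}}$ by the Hilbert--Schmidt norm $\bigl(\sum_n\norm{H_{k,i}\varphi_n}_{Y_{\tau_k}}^2\bigr)^{1/2}$, using the per-mode estimate from Proposition~\ref{prop:history-compact}; your termwise triangle inequality followed by Cauchy--Schwarz computes exactly this bound, with the same constant $C_H$ and the same reduction of \eqref{eq:effective-noise-propagation} to \eqref{eq:effective-noise-basic}. Your remark on the order of bounding is unneeded: the paper sums first and then uses $\sum_n\lambda_n^{-2}e^{-2\lambda_n\sigma}\le e^{-2\lambda_1\sigma}\sum_n\lambda_n^{-2}$, which loses nothing and does not degenerate as $\sigma\to0$.
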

\begin{proof}
The estimate in the proof of Proposition~\ref{prop:history-compact} gives
\[
\sum_{n\ge1}\norm{H_{k,i}\varphi_n}_{Y_{\tau_k}}^2
\le\frac{C_M}{2}
\sum_{n\ge1}\lambda_n^{-2}e^{-2\lambda_n(t_{k-1}-t_i)}
\le C_H^2e^{-2\lambda_1(t_{k-1}-t_i)},
\]
where $C_H^2:=(C_M/2)\sum_{n\ge1}\lambda_n^{-2}<\infty$. The operator norm is bounded by the Hilbert--Schmidt norm, proving \eqref{eq:history-decay-quantitative}. Combining this with \eqref{eq:effective-noise-basic} proves \eqref{eq:effective-noise-propagation}.
\end{proof}

\subsection{Sensor number and  stability}\label{section_sensor_stability}

We now study the dependence on the sensor count, for fixed $\tau>0$ and $s$. For a sensor set $\Gamma_M$, let $\mathcal A_M:X\to Y_{\tau,M}$ be the operator $\mathcal A_{\tau,s}$ of \eqref{eq:Ataus-def} built with $\Gamma_M$, where $Y_{\tau,M}:=L^2(0,\tau;\R^M)$ is the space $Y_\tau$ of \eqref{eq:Ytau} with the sensor count made explicit, normed by the unweighted Euclidean norm on $\R^M$. Componentwise,
\[
\mathcal A_M\phi=(E_1\phi,\ldots,E_M\phi)^\top,
\qquad
(E_j\phi)(t):=\sum_{n\ge1}(1+\lambda_n)^{-s/2}c_{z_j,n}\phi_n\,(e^{-\lambda_nt}-1),
\qquad 0<t<\tau,
\]
where $E_j:X\to L^2(0,\tau)$ is the component at the sensor $z_j$.

\begin{proposition}\label{thm:sensor-singular-monotonicity}\label{thm:galerkin-sensor-stability}
Suppose that the sensor sets are nested, $\Gamma_{M+1}=\Gamma_M\cup\{z_{M+1}\}$. Then the singular values, listed in nonincreasing order, satisfy
\[
\sigma_q(\mathcal A_{M+1})\ge\sigma_q(\mathcal A_M),
\qquad q\ge1.
\]
For a fixed Galerkin space $V_n$, let
\[
\sigma_{n,M}
:=\inf_{\substack{\phi\in V_n\\\norm{\phi}_X=1}}
\norm{\mathcal A_M\phi}_{Y_{\tau,M}}
\]
be the smallest singular value of the restriction of $\mathcal A_M$ to $V_n$. If $\mathcal A_M$ is injective on $V_n$, then $\sigma_{n,M}>0$ and
\[
\norm{\phi-\psi}_X
\le\frac1{\sigma_{n,M}}
\norm{\mathcal A_M\phi-\mathcal A_M\psi}_{Y_{\tau,M}},
\qquad\phi,\psi\in V_n.
\]
Moreover, $\sigma_{n,M+1}\ge\sigma_{n,M}$, so the  stability constant does not increase when a sensor is added.
\end{proposition}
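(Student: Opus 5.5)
The whole argument rests on the componentwise structure of $\mathcal A_M$. Since the norm on $Y_{\tau,M}$ is the unweighted Euclidean one, we have, for every $\phi\in X$,
\[
\norm{\mathcal A_{M+1}\phi}_{Y_{\tau,M+1}}^2=\sum_{j=1}^{M+1}\norm{E_j\phi}_{L^2(0,\tau)}^2=\norm{\mathcal A_M\phi}_{Y_{\tau,M}}^2+\norm{E_{M+1}\phi}_{L^2(0,\tau)}^2\ \ge\ \norm{\mathcal A_M\phi}_{Y_{\tau,M}}^2 .
\]
The nesting $\Gamma_{M+1}=\Gamma_M\cup\{z_{M+1}\}$ guarantees that the first $M$ components of $\mathcal A_{M+1}$ are exactly those of $\mathcal A_M$. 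Equivalently, $\mathcal A_{M+1}^*\mathcal A_{M+1}=\mathcal A_M^*\mathcal A_M+E_{M+1}^*E_{M+1}$ with $E_{M+1}^*E_{M+1}\ge0$.

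For the singular values we use the Courant--Fischer min-max characterization. Both operators are Hilbert--Schmidt by Proposition~\ref{prop:Ataus-HS}, so $\mathcal A_M^*\mathcal A_M$ and $\mathcal A_{M+1}^*\mathcal A_{M+1}$ are compact, self-adjoint and nonnegative. Hence
\[
\sigma_q(\mathcal A)^2=\sup_{\dim W=q}\ \inf_{\phi\in W,\ \norm{\phi}_X=1}\norm{\mathcal A\phi}^2 .
\]
The pointwise inequality above passes through the inf and the sup, which gives $\sigma_q(\mathcal A_{M+1})\ge\sigma_q(\mathcal A_M)$. The only delicate point is bookkeeping when the number of nonzero singular values is finite; we handle it by setting $\sigma_q:=0$ beyond the rank, so that the min-max formula still holds. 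Injectivity (Proposition~\ref{prop:injectivity}) actually makes every $\sigma_q$ positive when $\Gamma_M$ is admissible.

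For the Galerkin statement we argue as follows.
\begin{enumerate}
\item The unit sphere of the finite-dimensional space $V_n$ is compact and $\phi\mapsto\norm{\mathcal A_M\phi}$ is continuous, so the infimum $\sigma_{n,M}$ is attained. If $\mathcal A_M$ is injective on $V_n$, the attained value is therefore positive.
\item Homogeneity gives $\norm{\mathcal A_M\chi}\ge\sigma_{n,M}\norm{\chi}_X$ for all $\chi\in V_n$.
\item Applying this to $\chi=\phi-\psi\in V_n$ and using linearity yields the stability estimate.
\item Taking the infimum over unit vectors of $V_n$ in the pointwise inequality of the first paragraph gives $\sigma_{n,M+1}\ge\sigma_{n,M}$. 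This also shows that injectivity on $V_n$ is inherited by $\mathcal A_{M+1}$.
\end{enumerate}
Identifying $\sigma_{n,M}$ with the smallest singular value of $\mathcal A_M|_{V_n}$ is the $q=n$ case of min-max applied to the restriction, or equivalently the smallest eigenvalue of the $n\times n$ Gram matrix.

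None of these steps is a real obstacle. The one point to state carefully is that the comparison requires the same unweighted norm and that $\mathcal A_M$ consists exactly of the first $M$ rows of $\mathcal A_{M+1}$; both are guaranteed by the nesting assumption.
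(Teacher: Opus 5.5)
Your proposal is correct and follows the paper's proof essentially step by step. Both use the splitting $\mathcal A_{M+1}^*\mathcal A_{M+1}=\mathcal A_M^*\mathcal A_M+E_{M+1}^*E_{M+1}$ together with the min--max principle for the singular values, compactness of the unit sphere of $V_n$ for $\sigma_{n,M}>0$ and the inverse estimate, and the identity $\|\mathcal A_{M+1}\phi\|^2=\|\mathcal A_M\phi\|^2+\|E_{M+1}\phi\|^2$, taken to the infimum over unit vectors of $V_n$, for $\sigma_{n,M+1}\ge\sigma_{n,M}$.
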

\begin{proof}
Writing $G_M:=\mathcal A_M^*\mathcal A_M$, one has
\[
G_{M+1}=G_M+E_{M+1}^*E_{M+1}\ge G_M.
\]
The singular-value inequality follows from the min--max principle \cite[Corollary~4.13]{Teschl2014}. On $V_n$, positivity of $\sigma_{n,M}$ follows from injectivity and compactness of the unit sphere, and the inverse estimate follows from its definition. Finally,
\[
\norm{\mathcal A_{M+1}\phi}_{Y_{\tau,M+1}}^2
=\norm{\mathcal A_M\phi}_{Y_{\tau,M}}^2
+\norm{E_{M+1}\phi}_{L^2(0,\tau)}^2,
\]
so taking the infimum over the unit sphere of $V_n$ proves the last assertion.
\end{proof}

\section{Numerical experiments}\label{section_num}
\newcommand{\LowNoiseClassicalDice}{0.806}
\newcommand{\LowNoiseClassicalASSD}{17.470}
\newcommand{\LowNoiseClassicalHD}{22.157}
\newcommand{\LowNoiseBoundedDice}{0.929}
\newcommand{\LowNoiseBoundedASSD}{1.130}
\newcommand{\LowNoiseBoundedHD}{3.327}
\newcommand{\LowNoiseFourierDice}{0.916}
\newcommand{\LowNoiseFourierASSD}{1.199}
\newcommand{\LowNoiseFourierHD}{3.282}
\newcommand{\LowNoiseDNNDice}{0.958}
\newcommand{\LowNoiseDNNASSD}{0.590}
\newcommand{\LowNoiseDNNHD}{1.651}
\newcommand{\LowNoisePairedDiceGain}{0.0422}
\newcommand{\LowNoisePairedDiceLow}{0.0192}
\newcommand{\LowNoisePairedDiceHigh}{0.0727}
\newcommand{\LowNoiseFallbackCount}{3}
\newcommand{\CurrentClassicalDice}{0.682}
\newcommand{\CurrentClassicalASSD}{48.172}
\newcommand{\CurrentClassicalHD}{53.176}
\newcommand{\CurrentBoundedDice}{0.883}
\newcommand{\CurrentBoundedASSD}{1.956}
\newcommand{\CurrentBoundedHD}{5.513}
\newcommand{\CurrentFourierDice}{0.896}
\newcommand{\CurrentFourierASSD}{1.648}
\newcommand{\CurrentFourierHD}{4.580}
\newcommand{\CurrentDNNDice}{0.922}
\newcommand{\CurrentDNNASSD}{1.205}
\newcommand{\CurrentDNNHD}{3.452}
\newcommand{\CurrentPairedDiceGain}{0.0265}
\newcommand{\CurrentPairedDiceLow}{0.0102}
\newcommand{\CurrentPairedDiceHigh}{0.0430}
\newcommand{\CurrentFallbackCount}{3}
\newcommand{\CurrentTestIntervals}{24}
\newcommand{\CurrentTestParents}{8}
In this section we examine refinements of the sequential Galerkin--Tikhonov reconstruction of Algorithm~\ref{alg:sequential}, on sequences of three intervals with $0.1\%$ and with $1\%$ noise per interval. The refinement stages comprise a box-constrained reconstruction (Section~\ref{subsec:exp-settings}), a Fourier boundary approximation (Section~\ref{subsec:fourier-refinement}) and a pretrained contour-network correction (Section~\ref{subsec:contour-refinement}). In all cases, the history term is computed from the classical Galerkin--Tikhonov reconstructions; the refinements act only on the current source. Results are collected in Section~\ref{sec:final_results}.

\subsection{Discretization, sequential Tikhonov and bounded reconstruction}\label{subsec:exp-settings}

We take $T=3/4$ and $K=3$ intervals $I_k=[t_{k-1},t_k)$ with $t_k=k/4$, so that $\tau_k=1/4$. For the spectral discretization we write $\sqrt{\lambda_n}=j_{m_n,\ell_n}$, where $j_{m,\ell}$ is the $\ell$-th positive zero of $J_m$ as in the proof of Lemma~\ref{lem:cn-divergence}, and we retain the eigenfunctions~\eqref{eigenfunction_2d} with angular order $m_n\le m_{\max}:=16$ and radial index $\ell_n\le\ell_{\max}:=24$, both the cosine and the sine mode when $m_n\ge1$:
\[
 \mathcal N:=\{n\in\mathbb N:\ m_n\le m_{\max},\ \ell_n\le\ell_{\max}\},\qquad
 N:=\#\mathcal N=792,\qquad
 V_{\mathcal N}:=\operatorname{span}\{\varphi_n:\ n\in\mathcal N\}.
\]
Here the subscript of $V$ is a finite set of indices; with this reading, the Galerkin spaces of~\eqref{eq:galerkin-spaces} are $V_n=V_{\{1,\ldots,n\}}$. The truncation is by angular order and radial index rather than by eigenvalue, so $\mathcal N$ is not of the form $\{1,\ldots,n\}$: one has $V_{111}\subsetneq V_{\mathcal N}\subsetneq V_{2377}$, and in particular $V_{\mathcal N}\ne V_N$. The identification of the Hilbert-scale problem~\eqref{eq:intro-tikhonov-Ls} with the standard Tikhonov problem~\eqref{eq:intro-tikhonov-phi} on a Galerkin space uses only that the space is spanned by eigenfunctions, so that $L_s^{-1}$ maps it onto itself. This holds for $V_{\mathcal N}$ as well, and the discrete problem~\eqref{eq:current-tikhonov} below is the time-sampled counterpart of~\eqref{eq:kth-tikhonov} with $V_{n_k}$ replaced by $V_{\mathcal N}$. 

The reconstruction grid consists of the $256\times256$ equispaced points of $[-1,1]^2$, with spacing $h=2/255$; we write $\Omega_h$ for the grid points that lie in $\Omega$. A grid function is a function $p:\Omega_h\to\R$, extended by zero to the remaining grid points, and it is mapped to spectral coefficients by the Riemann sum
\[
 (\Pi_hp)_n:=h^2\sum_{x\in\Omega_h}p(x)\varphi_n(x)\approx\ip{p}{\varphi_n}_X,
 \qquad n\in\mathcal N,
\]
so that $\Pi_hp\in\R^N$ approximates the coefficients of $p$ with respect to the retained eigenfunctions.

Each interval is observed at the relative times $\nu\Delta t$, $\nu=0,\ldots,124$, with $\Delta t=1/500$, that is, at the absolute times $t_{k-1}+\nu\Delta t$. The $M=16$ sensors sit at the angles
\begin{equation*}
 \theta_j=\frac{2\pi(j-1)}{M}
       +\zeta\cos\!\left(\frac{2\pi(j-1)}{M}\right),
 \qquad \zeta=\frac{\pi\sqrt{2}}{32},\qquad j=1,\ldots,M,
\end{equation*}
so each interval contributes $125\cdot16=2{,}000$ measurements. Since $\theta_9-\theta_1=\pi-2\zeta$ and $(\pi-2\zeta)/\pi=1-\sqrt{2}/16\notin\Q$, the set $\Gamma_{16}$ is admissible in the sense of Section~\ref{section_intro}. 

Boldface denotes the vectors and matrices of the discretization. Sampling~\eqref{eq:Ftau-def} and~\eqref{eq:Hki-def} at these times, with measurements ordered first by time and then by sensor, gives matrices $\mathbf F_k,\mathbf H_{k,i}\in\R^{(125M)\times N}=\R^{2000\times792}$ acting on coefficient vectors in $\R^N$; they represent $F_{\tau_k}$ and $H_{k,i}$ on $V_{\mathcal N}$, respectively. Since the three intervals have the same length and the same relative sampling times, $\mathbf F_1=\mathbf F_2=\mathbf F_3$; the index only refers to the interval on which the matrix acts. With the coefficient vectors $\mathbf p_i^\dagger:=\Pi_h\chi_{D_i}$ of the grid samples of the true sources, the sampled data and shifted data are the discrete counterparts of~\eqref{eq:history-decomposition} and~\eqref{eq:shifted-data-delta},
\begin{equation}\label{eq:current-block-system}
 \mathbf v_k^\delta=\sum_{i<k}\mathbf H_{k,i}\mathbf p_i^\dagger+\mathbf F_k\mathbf p_k^\dagger+\mathbf e_k,
 \qquad
 \widetilde{\mathbf v}_k^\delta=\mathbf v_k^\delta-\sum_{i<k}\mathbf H_{k,i}\widehat{\mathbf p}_i^\delta,
\end{equation}
where $\mathbf e_k$ is the noise vector~\eqref{eq:current-noise} and $\widehat{\mathbf p}_i^\delta$ are the classical reconstructions below. 

Let $\mathbf W_s:=\operatorname{diag}\bigl((1+\lambda_n)^{-s/2}\bigr)_{n\in\mathcal N}$ be the matrix of $L_s^{-1}$ on $V_{\mathcal N}$. The classical reconstruction on interval $k$ is
\begin{equation}\label{eq:current-tikhonov}
 \widehat{\boldsymbol\phi}_k^\delta:=\operatorname*{argmin}_{\boldsymbol\phi\in\R^N}
 \Bigl\{\tfrac12\bigl\|\mathbf F_k\mathbf W_s\boldsymbol\phi-\widetilde{\mathbf v}_k^\delta\bigr\|_2^2
 +\tfrac\alpha2\|\boldsymbol\phi\|_2^2\Bigr\},\qquad
 \widehat{\mathbf p}_k^\delta:=\mathbf W_s\widehat{\boldsymbol\phi}_k^\delta,\qquad
 \widehat p_k^\delta:=\sum_{n\in\mathcal N}\widehat p_{k,n}^\delta\varphi_n,
\end{equation}
where $\widehat p_{k,n}^\delta$ is the entry of $\widehat{\mathbf p}_k^\delta$ with index $n$. The minimizer is evaluated through a thin singular value decomposition of $\mathbf F_k\mathbf W_s$, with the filter factors $\sigma_q/(\sigma_q^2+\alpha)$ applied to its singular values $\sigma_q$. In terms of $\widehat p_k^\delta$, the penalty is the Hilbert-scale term $\alpha\|L_s\widehat p_k^\delta\|_X^2=\alpha\sum_{n\in\mathcal N}(1+\lambda_n)^s(\widehat p_{k,n}^\delta)^2$ of~\eqref{eq:intro-tikhonov-Ls}.  Since $\Delta t\,\|\cdot\|_2^2$ is a Riemann sum for $\|\cdot\|_{Y_{\tau_k}}^2$, \eqref{eq:current-tikhonov} approximates the regularized step of Algorithm~\ref{alg:sequential} with $\alpha_k=\alpha\,\Delta t$ for every $k$. We use $s=0.49$ at both noise levels. At $1\%$ noise, $\alpha=10^{-11/2}\approx3.16\times10^{-6}$, calibrated in an earlier experiment on a different reference source and not adjusted to the present data; at $0.1\%$ noise, $\alpha=10^{-15/2}\approx3.16\times10^{-8}$, selected on the $24$ calibration sequences introduced below. As in Algorithm~\ref{alg:sequential}, only the classical vectors $\widehat{\mathbf p}_i^\delta$ enter the history subtraction in~\eqref{eq:current-block-system}.

\subsubsection{A bounded version}
Since the sources are characteristic functions, their values lie in $[0,1]$, and the classical reconstruction~\eqref{eq:current-tikhonov} does not exploit this. For the numerical experiments only, we therefore also compute a box-constrained variant that enforces the bound pointwise; it serves as the starting point of the boundary refinements below, and its convergence and stability analysis is beyond the scope of this paper. With the grid-to-data map $\mathbf F_k\Pi_h$ we compute
\begin{equation*}
 \widehat p_k^{\delta,\mathrm B}:=\mathop{\arg\min}_{p:\,\Omega_h\to[0,1]}
 \left\{\frac{\|\mathbf F_k\Pi_hp-\widetilde{\mathbf v}_k^\delta\|_2^2}{2d_0^2}
       +\frac{\alpha_Bh^2}{2}\sum_{x\in\Omega_h}p(x)^2\right\},
 \quad
 d_0\approx0.612,
\end{equation*}
where $d_0$ is a data scale fixed in an earlier experiment, $\alpha_B=10^{-4}$ at $1\%$ noise and $\alpha_B=10^{-13/2}\approx3.16\times10^{-7}$ at $0.1\%$ noise, the latter selected on the calibration sequences among the values for which the Newton iteration converges on every calibration sequence. This box-constrained spatial $L^2$ problem differs from~\eqref{eq:current-tikhonov} in both the penalty and the feasible set, and $\widehat p_k^{\delta,\mathrm B}$ is not obtained by truncating $\widehat p_k^\delta$ to $[0,1]$. We use a dual semismooth Newton method; see~\cite{WangWuYuTeo2024} for this approach to box-constrained parabolic optimal control problems.

The bounded reconstruction is used both as the initial guess for the Fourier coefficients and as the input to the network, whereas the unconstrained Tikhonov reconstruction is used to evaluate the history term that is subtracted from the data on the subsequent time slabs.

\subsubsection{Sources and noisy data}
On each interval the true region $D_k$ consists of one connected component, and this is known to the refinement stages; it is a disk or a simple polygon, hence a Lipschitz region in the sense of Definition~\ref{def:regions}. Its shape is chosen independently from circles, squares, equilateral triangles and straight-edged five-point stars. Position, area, orientation and family may change independently across intervals.

For each interval, let $\boldsymbol\xi_k\in\R^{2000}$ have independent standard normal components, and set
\begin{equation}\label{eq:current-noise}
 \mathbf e_k:=\rho\,\|\mathbf F_k\mathbf p_k^\dagger\|_2\,\frac{\boldsymbol\xi_k}{\|\boldsymbol\xi_k\|_2},
 \qquad \delta_k:=\|\mathbf e_k\|_2,\qquad \rho\in\{10^{-2},10^{-3}\},
\end{equation}
which is added to the complete noiseless data vector in~\eqref{eq:current-block-system}, including the contributions of earlier sources. The noise is thus $0.1\%$ or $1\%$ of the isolated current-source signal.

There are $24$ stopping-calibration sequences, $80$ training sequences with two independent noise realizations, $16$ validation sequences and $8$ test sequences, all disjoint; the test sequences comprise $24$ source instances, six per family, and the partition is the same at both noise levels. Table~\ref{tab:current-parameters} summarizes the discretization, data and reconstruction settings.

\begin{table}[!htbp]
\centering
\numtablestyle
\caption{Settings for the discretization, data partitions, and classical and bounded reconstructions.}
\label{tab:current-parameters}
\begin{tabularx}{\linewidth}{@{}l Y@{}}
\toprule
Domain and grid & Unit disk; $256^2$ grid, $h=2/255$; unit source intensity.\\
Source & One known component per interval; circle, square, triangle or five-point star.\\
Position, orientation and size & Area $[0.04,0.12]$; center radius $\le0.65$;  random orientation.\\
Intervals and observations & $T=3/4$; three intervals of length $1/4$; $\Delta t=1/500$; $16$ sensors; $125$ times and $2{,}000$ measurements per interval.\\
Spectral discretization & Angular orders $m\le m_{\max}=16$; radial indices $\ell\le\ell_{\max}=24$; $N=792$ modes.\\
Noise & A vector with independent standard normal components, scaled to $0.1\%$ or $1\%$ of the current-source signal norm.\\
Data partitions & $24$ calibration, $80$ training, $16$ validation and $8$ test sequences.\\
Classical Tikhonov & $s=0.49$; $\alpha\approx3.16\times10^{-6}$ at $1\%$, $\approx3.16\times10^{-8}$ at $0.1\%$.\\
Bounded spatial $L^2$ & $\alpha_B=10^{-4}$ at $1\%$, $\approx3.16\times10^{-7}$ at $0.1\%$; $d_0\approx0.612$; constraint $0\le p\le1$, zero outside disk.\\
\bottomrule
\end{tabularx}
\end{table}

\subsubsection{Reconstruction errors}\label{subsubsec:metrics}
For a grid function $p$ we define the estimated support $S(p):=\{x\in\Omega_h:\ p(x)\ge\tfrac12\}$, the grid analogue of the thresholded region $\widehat D_k^\delta$ of Algorithm~\ref{alg:sequential}, and we write $D_{k,h}:=\Omega_h\cap D_k$ for the true region of Definition~\ref{def:regions} sampled on the grid; the classical reconstruction $\widehat p_k^\delta$ is evaluated on $\Omega_h$ for this purpose. An estimated support $S$ is compared with $D_{k,h}$ through one overlap measure and two boundary distances, all standard in the comparison of reconstructed regions; see \cite{TahaHanbury2015} for a survey and \cite{MaierHeinEtAl2024} for current recommendations. With $\#$ denoting the number of grid points, the Dice coefficient \cite{Dice1945} is defined as
\[
 \operatorname{Dice}(S,D_{k,h})=\frac{2\,\#(S\cap D_{k,h})}{\#S+\#D_{k,h}}\in[0,1].
\]
 Being an area measure, it is insensitive to how a mismatch is distributed along the boundary, so we complement it with two surface distances. The boundary $\partial_hS$ of a grid set $S$ is the set of points of $S$ with at least one of their four nearest grid neighbours outside $S$, and $\operatorname{dist}(x,B):=\min_{y\in B}\|x-y\|_2$ is the Euclidean distance from a grid point $x$ to a nonempty set $B$ of grid points. The average symmetric surface distance \cite{HeimannEtAl2009}
\[
 \operatorname{ASSD}(S,D_{k,h})
 =\frac12\Bigl(\frac1{\#\partial_hS}\sum_{x\in\partial_hS}\operatorname{dist}(x,\partial_hD_{k,h})
 +\frac1{\#\partial_hD_{k,h}}\sum_{y\in\partial_hD_{k,h}}\operatorname{dist}(y,\partial_hS)\Bigr)
\]
is the mean of the two directed average boundary distances.

The Hausdorff distance \cite{HuttenlocherKlandermanRucklidge1993} replaces the two means by maxima and is therefore determined by the single worst boundary point; we also use the modified distance
\[
 \operatorname{HD95}(S,D_{k,h})
 =\max\Bigl\{Q_{0.95}\bigl(\operatorname{dist}(x,\partial_hD_{k,h}):\ x\in\partial_hS\bigr),\
 Q_{0.95}\bigl(\operatorname{dist}(y,\partial_hS):\ y\in\partial_hD_{k,h}\bigr)\Bigr\},
\]
where $Q_{0.95}$ is defined by ordering its arguments $a_{(1)}\le\cdots\le a_{(m)}$ and linearly interpolating as follows:
\[
 u=0.95(m-1),\qquad j=\lfloor u\rfloor,\qquad \omega=u-j,\qquad
 Q_{0.95}(a)=(1-\omega)a_{(j+1)}+\omega a_{(\min\{j+2,m\})}.
\]
This reduces the influence of isolated large boundary errors \cite{MaierHeinEtAl2024}. All boundary distances in the tables and figures are expressed in units of the grid spacing $h$.

\subsection{Fourier refinement}\label{subsec:fourier-refinement}
Trigonometric boundary parameterizations with residual-based stopping have precedent in inverse source problems~\cite{RundellZhang:2018,LinOuZhangZhang2024}. Dropping the interval index, the fitted boundary has a free center $c_F$ and polar radius
\begin{equation*}
 x_F(\vartheta)=c_F+r_F(\vartheta)(\cos\vartheta,\sin\vartheta),\qquad
 r_F(\vartheta)=a_0+\sum_{q=2}^{16}
             \big(a_q\cos q\vartheta+b_q\sin q\vartheta\big),
\end{equation*}
where $\vartheta$ is the polar angle about $c_F$ and $q$ indexes boundary harmonics. The parameter vector $\mathbf a=(c_F,a_0,a_2,b_2,\ldots,a_{16},b_{16})$ has $33$ entries. 

Let $D(\mathbf a)$ be the region enclosed by $x_F$ and $\mathbf p(\mathbf a)\in\R^N$ the vector of spectral coefficients $p_n(\mathbf a)$, $n\in\mathcal N$, of $\chi_{D(\mathbf a)}$. Since $-\Delta\varphi_n=\lambda_n\varphi_n$, the divergence theorem gives
\[
 p_n(\mathbf a)=\ip{\chi_{D(\mathbf a)}}{\varphi_n}_X
 =-\frac1{\lambda_n}\int_{\partial D(\mathbf a)}\partial_{\mathbf n_D}\varphi_n,
 \qquad n\in\mathcal N,
\]
with respect to arc length, where $\mathbf n_D$ is the outward unit normal to $D(\mathbf a)$; the response $\mathcal G_k(\mathbf a):=\mathbf F_k\mathbf p(\mathbf a)$ is thus computed from boundary integrals, without first representing $D(\mathbf a)$ on the spatial grid. A Levenberg--Marquardt continuation minimizes
\begin{equation*}
 \tfrac12\|\mathcal G_k(\mathbf a)-\widetilde{\mathbf v}_k^\delta\|_2^2
 +\tfrac{\beta}{2}\sum_{q=2}^{16}q^4(a_q^2+b_q^2),
 \qquad \beta=10^{-2},10^{-3},\ldots,10^{-12},
\end{equation*}
stage by stage in $\beta$, retaining only candidates with $\min_\vartheta r_F(\vartheta)>0.005$ and $\max_\vartheta|x_F(\vartheta)|<0.99$.

The stage is selected by a discrepancy principle. The shifted data contain the errors of the earlier classical reconstructions, so the relevant error level is the effective error $\eta_k^\delta$ of~\eqref{eq:eta-k}, which cannot be computed from the data (Remark~\ref{rem:parameter-choice}); we calibrate its size relative to $\delta_k$ on the $24$ calibration sequences. Let $\eta_k^{(c)}:=\|\widetilde{\mathbf v}_k^{\delta,(c)}-\mathbf F_k\mathbf p_k^{\dagger,(c)}\|_2$ and $\delta_k^{(c)}$ denote the effective error and the noise level of calibration sequence $c$. Arrange the $24$ ratios $\eta_k^{(c)}/\delta_k^{(c)}$ in increasing order, $R_{k,1}\le\cdots\le R_{k,24}$, and set
\[
 \varkappa_1:=1,\qquad
 \varkappa_k:=\max\{1,R_{k,23}\}\quad(k=2,3),\qquad
 \delta_{{\rm eff},k}:=\varkappa_k\delta_k.
\]
Thus the later intervals use the second largest ratio, while the first has no contribution from previous reconstruction errors. The resulting factors are $(\varkappa_1,\varkappa_2,\varkappa_3)\approx(1,\,1.073,\,1.180)$ at $1\%$ noise and $(1,\,1.094,\,1.227)$ at $0.1\%$. They describe a combined noise-and-history scale. We stop at the first feasible stage with $\|\mathcal G_k(\mathbf a)-\widetilde{\mathbf v}_k^\delta\|_2\le\delta_{{\rm eff},k}$. If none of the computed feasible approximations satisfies this criterion, the bounded reconstruction is retained without neural correction.

\subsection{Pretrained contour-network refinement}\label{subsec:contour-refinement}

The network corrects the fitted radius while keeping the center $c_F$ fixed. Its input is a periodic signal at $256$ angular nodes with eight channels. The inputs do not include the shape family, absolute position or observation vector. Figure~\ref{fig:dnn-postprocessor} shows the architecture, a circular residual U-Net~\cite{Ronneberger2015UNet} for periodic signals with widths $32/64/128/192$ and about $1.3\times10^6$ parameters, collectively denoted by $w$; a pointwise output map followed by a fixed Fourier projection onto the modes $2,\ldots,32$ produces the correction $d_w(\vartheta)$. The corrected radius is
\begin{equation*}
 r_w(\vartheta)=r_F(\vartheta)
 \exp\!\left[\rho_\star\tanh\!\left(\frac{d_w(\vartheta)+c_w}{\rho_\star}\right)\right],
 \qquad \rho_\star=\log5,\qquad
 \langle r_w^2\rangle=\langle r_F^2\rangle,
\end{equation*}
where $\langle\cdot\rangle$ is the mean over $1{,}024$ equispaced angles and the constant $c_w$ is determined by bisection.  We set $\rho_\star=\log 5$ as a fixed architectural parameter, allowing radius corrections by factors between $1/5$ and $5$ while preventing unbounded relative deformations. We sample the boundary at $512$ angles and convert it into a smooth approximation of the characteristic function on the pixel grid.

\begin{figure}[p]
\centering
\includegraphics[width=\linewidth]{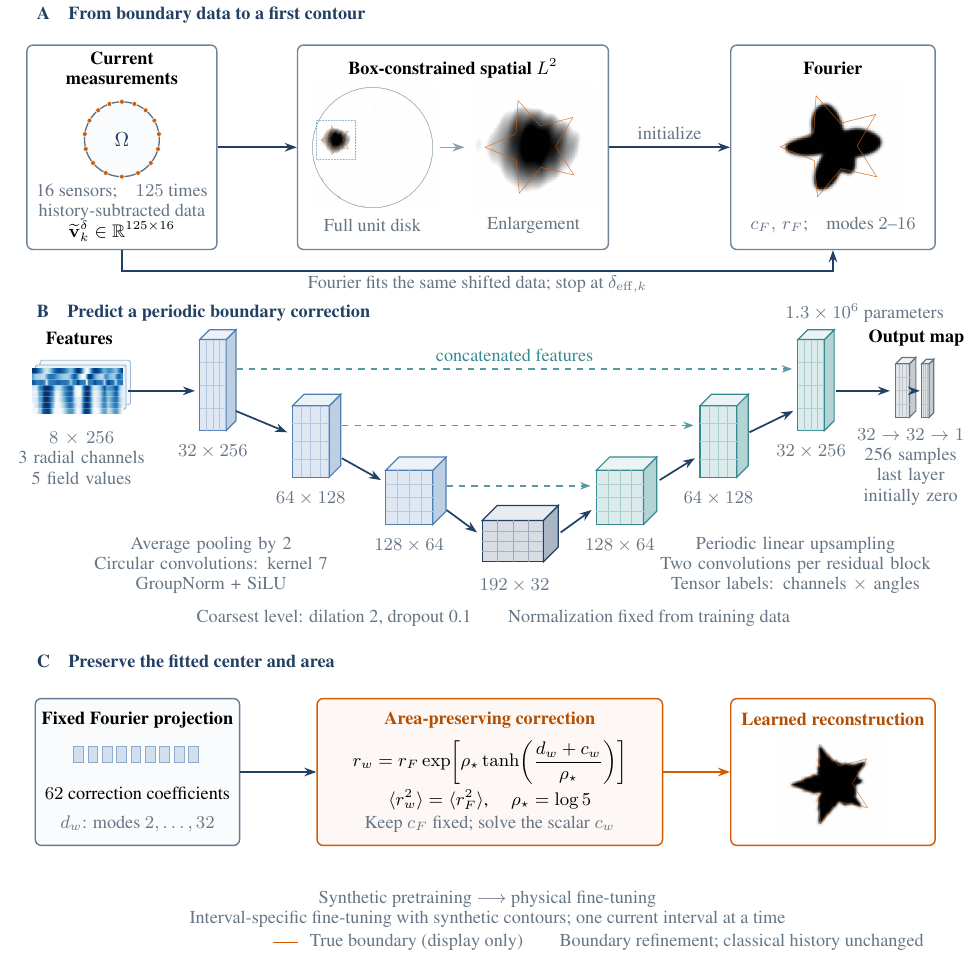}
\caption{Boundary refinement of the current source within the sequential Galerkin--Tikhonov reconstruction. The pictures show the first interval of one test sequence; all detail views share the crop marked on the disk, and orange contours show the truth for display only. Blue and teal tensors give encoder and decoder dimensions as channels times angular samples. The fixed Fourier projection is followed by a radial transformation that preserves the fitted center and the Fourier-estimated area.}
\label{fig:dnn-postprocessor}
\end{figure}

\subsubsection{Pretraining and interval-specific fine-tuning}
Training has three stages. First, $4{,}000$ synthetic perturbed contours, with $400$ more for validation, provide $10$ epochs of pretraining. Second, the network is fine-tuned for $100$ epochs on an earlier set of $80$ physical geometries with two noisy observations each and $16$ validation geometries, all disjoint from the present sequences. Third, three networks, one per interval, are initialized with the parameters obtained in the second stage and trained for $100$ epochs on the training inputs of their interval; each minibatch of $16$ source instances, with both noise realizations, is paired with an additional batch of $16$ synthetic contours, four per family, and the two losses are averaged with equal weights.  For either kind of input the loss is
\[
 \mathcal L=\mathcal L_{\rm surface}+0.1\,\mathcal L_{\rm BCE}+\mathcal L_{\log r},
\]
the sum of a quadrature approximation of the symmetric boundary distance in pixels, a balanced binary cross-entropy on a $128\times128$ object-centered grid, and a Smooth L1 loss between predicted and true log-radii. The trained parameters are selected by minimizing validation ASSD averaged equally over the shape families. The pretrained and fine-tuned weights, and the input normalization, are shared by the two noise levels.

\subsection{Overall results}\label{sec:final_results}
Tables~\ref{tab:current-reconstruction-comparison} and~\ref{tab:current-reconstruction-breakdown} summarize the sequential reconstruction and its refinements on all test sequences. At $0.1\%$ noise, the sequential reconstruction with Fourier and DNN refinement gives the best overall results, with similar accuracy across the three intervals, and improves the recovery of corners missed by Fourier approximation, especially in stars. The Fourier fit alone has lower mean Dice than the bounded reconstruction. At $1\%$ noise, the fully refined reconstruction still gives the best overall results, but later intervals and stars remain harder. The large classical ASSD at this noise level is mainly due to empty thresholded supports under the convention of Section~\ref{subsubsec:metrics}.

\begin{table}[!htbp]
\centering
\small
\caption{Sequential Galerkin--Tikhonov reconstruction and refinements of the current source on all $24$ test intervals from eight complete test sequences, at $0.1\%$ and at $1\%$ noise. All reconstructions use the classical Galerkin--Tikhonov estimates in the history term.}
\label{tab:current-reconstruction-comparison}
\begin{tabular}{lrrrrrr}
\toprule
 & \multicolumn{3}{c}{$0.1\%$ noise} & \multicolumn{3}{c}{$1\%$ noise} \\
\cmidrule(lr){2-4}\cmidrule(lr){5-7}
Reconstruction & Dice $\uparrow$ & ASSD $\downarrow$ & HD95 $\downarrow$ & Dice $\uparrow$ & ASSD $\downarrow$ & HD95 $\downarrow$ \\
\midrule
Sequential Galerkin--Tikhonov & 0.806 & 17.470 & 22.157 & 0.682 & 48.172 & 53.176 \\
with bounded $L^2$ reconstruction & 0.929 & 1.130 & 3.327 & 0.883 & 1.956 & 5.513 \\
with Fourier refinement & 0.916 & 1.199 & 3.282 & 0.896 & 1.648 & 4.580 \\
with Fourier and DNN refinement & 0.958 & 0.590 & 1.651 & 0.922 & 1.205 & 3.452 \\
\bottomrule
\end{tabular}
\end{table}
\begin{table}[!htbp]
\centering
\small
\caption{Sequential Galerkin--Tikhonov reconstruction with boundary refinement, by interval and true shape family, at $0.1\%$ and at $1\%$ noise: mean Dice, with the mean ASSD  in parentheses. Each interval contains eight cases; each family contains six.}
\label{tab:current-reconstruction-breakdown}
\begin{tabular}{lrrrrr}
\toprule
 & & \multicolumn{2}{c}{with Fourier} & \multicolumn{2}{c}{with Fourier and DNN} \\
\cmidrule(lr){3-4}\cmidrule(lr){5-6}
Group & Cases & $0.1\%$ & $1\%$ & $0.1\%$ & $1\%$ \\
\midrule
Interval 1 & 8 & 0.932 (0.96) & 0.914 (1.33) & 0.961 (0.56) & 0.965 (0.54) \\
Interval 2 & 8 & 0.911 (1.23) & 0.884 (1.72) & 0.952 (0.62) & 0.889 (1.63) \\
Interval 3 & 8 & 0.905 (1.41) & 0.889 (1.89) & 0.960 (0.60) & 0.912 (1.45) \\
\midrule
Circle & 6 & 0.996 (0.07) & 0.988 (0.18) & 0.996 (0.06) & 0.987 (0.19) \\
Square & 6 & 0.949 (1.03) & 0.933 (1.28) & 0.974 (0.52) & 0.951 (0.93) \\
Triangle & 6 & 0.929 (1.04) & 0.877 (1.98) & 0.964 (0.50) & 0.909 (1.42) \\
Star & 6 & 0.790 (2.65) & 0.784 (3.14) & 0.897 (1.28) & 0.841 (2.28) \\
\bottomrule
\end{tabular}
\end{table}

Evaluating the network adds negligible cost relative to the preceding stages. Figure~\ref{fig:current-sequence-s001} illustrates the sequential reconstruction and its refinements on one complete test sequence at both noise levels.

\begin{figure}[p]
\centering
\includegraphics[width=6.5in]{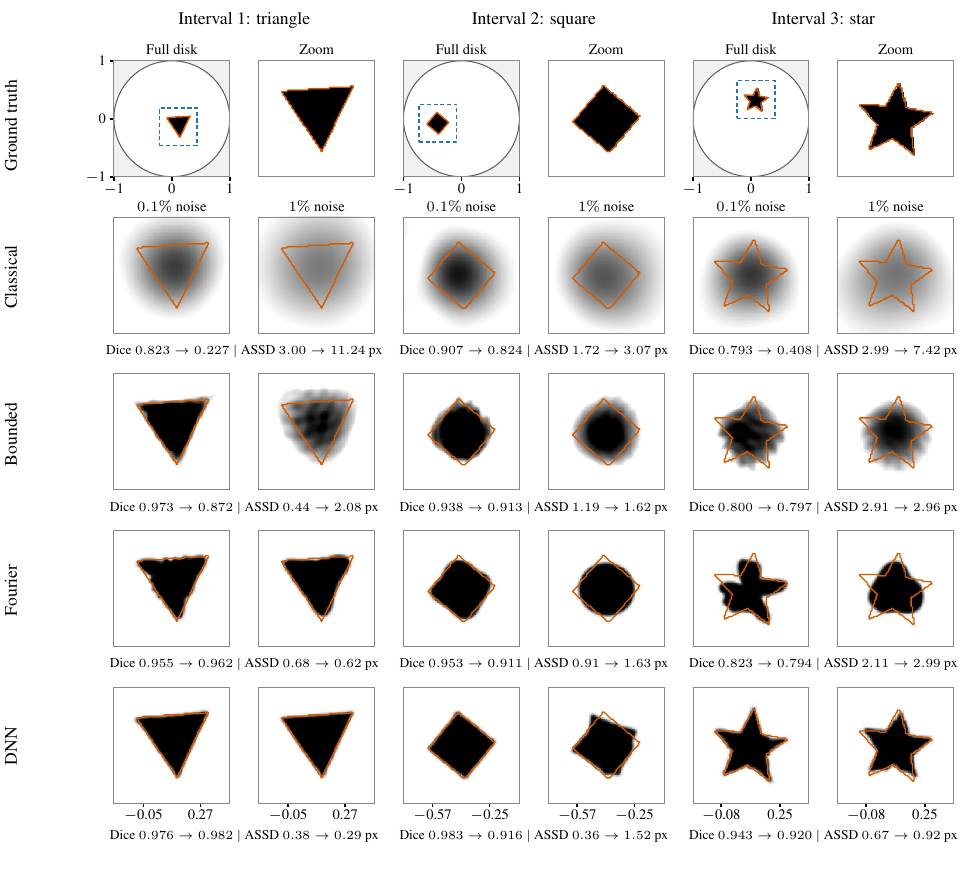}
\caption{Examples of sequential Galerkin--Tikhonov reconstruction and refinements for one test sequence (triangle, square, star) at $0.1\%$ noise (left) and $1\%$ noise (right).  The top row shows the truth and the enlargement used below; subsequent rows show the four reconstruction stages. Orange contours mark the truth. All reconstructions share coordinates and the intensity scale $[0,1]$. Labels report Dice and ASSD at $0.1\%\to1\%$.}
\label{fig:current-sequence-s001}
\end{figure}
\FloatBarrier

\section{Conclusions}

We developed a sequential method for reconstructing, from sparse boundary flux data, characteristic sources that are piecewise constant in time. The spectral representation shows that the forward operator $F_\tau$ is block lower triangular in time, separating the contribution of the current source from the history of earlier intervals, and the reconstruction proceeds by forward substitution. Although  $F_\tau$ is not closable on $L^2(\Omega)$, source-side smoothing produces a compact operator that is injective for admissible sensors. For $0<s<\tfrac12$, this gives finite-window uniqueness and convergence of the Galerkin--Tikhonov reconstruction under consistent choices of the regularization and discretization parameters. The error analysis quantifies how earlier reconstruction errors enter the data for a later interval, with exponential decay in time. 

In the numerical experiments, we used the sequential Galerkin--Tikhonov reconstruction as the starting approximation for Fourier and neural boundary refinement, in which the boundary of the source support is first parametrized in polar coordinates by a truncated Fourier series and then corrected by a neural network, thereby reducing the smoothing induced by the Hilbert-scale penalty. Future developments will aim to provide theoretical analysis of the box-constrained version of Algorithm~\ref{alg:sequential}, and to improve  the sequential Galerkin--Tikhonov reconstruction by using penalty terms represented by neural networks, following a network Tikhonov approach~\cite{bianchi2023uniformly,bianchi2025data,li2020nett,aleotti2026iterated}.

\FloatBarrier

\section*{Acknowledgments}
 Hongqin Zhang, Shupeng Zhang and Zhidong Zhang are supported by the National Key Research and Development Plan of China (Grant No. 2023YFB3002400). Davide Bianchi is supported by the Startup Fund of Sun Yat-sen University.

\bibliographystyle{abbrvurl}
\bibliography{ref}

\end{document}